\newtheorem{theorem}{Theorem}[section]
\newtheorem{lemma}[theorem]{Lemma}
\newtheorem{proposition}[theorem]{Proposition}
\newcommand{\m}[1]{\ensuremath{\mathbf{#1}}}
\newcommand{\ii}{{\rm i}}
\newcommand{\x}{x}
\newcommand{\updom}{R^+}
\newcommand{\downdom}{R^-}
\newcommand{\upG}{\m{G}^{(2)}_+}
\newcommand{\downG}{\m{G}^{(2)}_-}
\newcommand{\totalE}{\hat{\m{E}}}
\newcommand{\totalH}{\hat{\m{H}}}
\newcommand{\Hs}{\m{H}^s}
\newcommand{\Es}{\m{E}^s}
\newcommand{\Hr}{\m{H}^r}
\newcommand{\Er}{\m{E}^r}
\newcommand{\Ei}{\m{E}^i}
\newcommand{\Hi}{\m{H}^i}
\newcommand{\W}{\m{W}}
\newcommand{\V}{\m{V}}
\newcommand{\U}{\m{U}}
\newcommand{\Y}{\m{Y}}
\newcommand{\green}{g}
\newcommand{\g}{g_0}
\newcommand{\hatg}{\hat{g}_0}
\newcommand{\surfaceone}{\tilde{H}^{-\frac{1}{2}}(\Gamma)}
\newcommand{\surfacetwo}{\tilde{H}^{\frac{1}{2}}(\Gamma)}
\newcommand{\surfacethree}{\tilde{H}^{-\frac{1}{2}}_{div}(\Gamma)}
\newcommand{\surfacefour}{\tilde{H}^{-\frac{1}{2}}_{curl}(\Gamma)}
\newcommand{\Div}{\text{div }}
\newcommand{\Curl}{\text{curl }}
\newcommand{\surfacecurl}{\vec{curl}_{\Gamma}}
\newcommand{\trans}[1]{#1_{\perp}}
\numberwithin{equation}{section}
\title[Diffraction by an aperture in a perfectly conducting screen]{A rigorous theory on electromagnetic diffraction by a planar aperture in a perfectly conducting screen}
\author{ Ying Liang}
\address{Department of Mathematics, Purdue University, West Lafayette, Indiana 47907, USA}
\email{liang402@purdue.edu}
\author{Hai Zhang}
\address{Department of Mathematics, 
 HKUST,  Clear Water Bay, Kowloon, Hong Kong SAR, China}
   \email{ haizhang@ust.hk}
   \thanks{H. Zhang was partially supported by Hong Kong RGC grant GRF 16304621}
\begin{document}

\maketitle
\begin{abstract}
In this paper, we revisit the classic problem of diffraction of electromagnetic waves by an aperture in a perfectly conducting plane. We formulate the diffraction problem using a boundary integral equation that is defined on the aperture using Dyadic Green's function. This integral equation turns out to align with the one derived by Bethe using fictitious magnetic charges and currents. We then investigate the boundary integral equation using a saddle point formulation and establish the well-posedness of the boundary integral equation, including the existence and uniqueness of the solution in an appropriately defined Sobolev space. 
\end{abstract}

\section{Introduction}\label{sec:intro}

The study of diffraction of electromagnetic waves by small apertures in a perfectly conducting screen has a long history \cite{Born}. Developed by  Kirchhoff,  the first approach treated electromagnetic fields as scalar fields and expressed the scalar diffracted field in terms of the incident field in the aperture \cite{Kirchhoff}. 
Rayleigh analyzed the problem in the case of a circular aperture with normally incident harmonic plane waves in the regime of long wavelength.  The scalar theory
was later extended to a vectorial theory by Stratton and Chu \cite{Stratton39} to suit the vectorial nature of electromagnetic fields. However, the vector formulation in no way improves the situation regarding the fulfillment of the boundary conditions on the opaque screen. 
To our knowledge, the first accurate treatment of this problem was due to Bethe. In his seminar paper in 1941 \cite{bethe1944theory}, he resolved the diffraction problem by a small hole on a perfect conducting screen with zero thickness. 
By using fictitious magnetic charges and currents in the hole which satisfies the boundary conditions on the conducting screen, he found the leading order term of the solution which satisfies Maxwell's
equations and the boundary conditions. Using the solution, he derived a formula for the transmittance of light through a small hole. 
His formula was later improved by Bouwkamp to include high-order terms \cite{Bouwkamp}. Additionally, Smythe derived a general formula for the diffracted waves by any kind of aperture in \cite{Smythe47, Smythe50}. His formula was rigorously justified using the Stratton--Chu theorem in \cite{Drezet06}. In a different approach, Meixner developed a method using an infinite series expansion in terms of spheroidal functions to obtain an exact solution to this problem \cite{Meixner48}. Miles considered the variational aspects of the diffraction problem in \cite{Miles49}. We also note that a boundary integral equation formulation for the diffraction problem using Dyadic Green's function was developed by 
Levine and Schwinger in \cite{Levine50}. We refer to \cite{Abajo07, Jung14} and the references therein for recent developments in the topic. 


The focus of this paper is to revisit the classic diffraction problem by formulating a boundary integral equation defined on the aperture using Dyadic Green's function. Our main objective is to derive the boundary integral equation for the diffraction problem and establish its well-posedness. To achieve this, we introduce appropriately defined Sobolev spaces for the associated boundary integral operator and employ a saddle point formulation. This allows us to prove the well-posedness of the boundary integral equation, including the existence and uniqueness of the solution through the Fredholm alternative. Our work provides a rigorous foundation for the application of the boundary integral equation method to the classic diffraction problem. It is worth noting that rigorous theories of boundary integral equations have been widely applied to treat various boundary value problems for partial differential equations, particularly for scattering problems and inverse scattering problems where the scatterers are bounded and have a certain smoothness for the boundaries, as seen in \cite{colton2012inverse, nedelec2001acoustic, ammari2009layer}. However, these theories cannot be applied to the diffraction problem considered in this paper, as the scatterer is unbounded.

We remark that the electromagnetic diffraction problem by an unbounded screen with a bounded aperture has a close relationship to the diffraction problem by its complement in the plane -- a bounded screen located at the bounded planar aperture -- as stipulated by Babinet's principle of complementary screens \cite{Born}. While this principle has been previously discussed in the literature \cite{Born, Jackson}, the uniqueness of the solution to the two diffraction problems has not been fully addressed. Our results, therefore, provide a justification for Babinet's principle.
It is worth noting that the diffraction problem by a bounded perfect conducting screen has been previously studied using the electric field integral equation in \cite{Buffa03}. The integral equation derived in our study is similar to the one obtained therein, but we establish the well-posedness of the integral equation using a saddle point formulation, in contrast to the Helmholtz decomposition employed in \cite{Buffa03}.


\section{Setup of the diffraction problem}\label{sec:2}
We consider the diffraction problem of a perfectly conducting sheet $\{r=(r_1,r_2,r_3)\in\mathbb{R}^3: r_3=0\}$ with an aperture $\Gamma_0\subset \{r=(r_1,r_2,r_3)\in\mathbb{R}^3: r_3=0\}$ by an incoming electromagnetic wave from above. 
We denote the upper and lower half spaces by $\updom$ and $\downdom$ respectively, and $\Omega= \updom \bigcup \downdom \bigcup \Gamma$. 
Let $(\Ei,\Hi)$ be the incident electromagnetic wave given by
 $$\Ei(r)=\m{p} e^{\ii k\m{m}\cdot r},\   \Hi(r)=\m{q} e^{\ii k\m{m}\cdot r}, $$
 where
$\m{m}$ denotes the direction in which the incidental wave propagates, $\m{p}$ denotes the direction of the electric field, and $\m{q}$ denotes the direction of the magnetic field.  There holds
 $$\m{p}=\m{q}\times \m{m},\ \m{q}=\m{m}\times \m{p}.$$

The total fields $\totalE$, $\totalH$  are governed by the following Maxwell equations in
the frequency domain:
\begin{eqnarray*}\label{max}
\begin{array}{rlll}
\nabla\times  \totalE&=& \ii k \mu  \totalH\ &\text{ in }   \Omega,\\
\nabla\times  \totalH &=& -\ii k\epsilon \totalE& \text{ in }  \Omega ,\\
 \totalH &=& \Hi+\Hr+\Hs& \text{ in } \updom,\\
 \totalH \cdot \m{n} &=&0& \text{ on }  \partial \Omega,\\
 \totalE\times \m{n} &=& 0&  \text{ on }  \partial \Omega,
\end{array}
\end{eqnarray*}
where $\epsilon$ denotes the electric permittivity, 
 $\mu$ denotes the magnetic permeability, and $\m{n}$ denotes the outward unit normal. Besides,  $\Er(r)=-\m{p} \cdot (I-2\m{e}_3\m{e}_3)e^{\ii k\m{m}\cdot \overline{r}},\  \Hr(r)=\m{q} \cdot (I-2\m{e}_3\m{e}_3) e^{\ii k\m{m}\cdot \overline{r}} $ are the reflected waves in the upper half space, where $\m{e}_3 = (0,0,1)^\top$ and  $\overline{r}=(r_1,r_2,-r_3)$.  $( \Es, \Hs)$ are the scattered waves due to the presence of the aperture, which propagates in all directions. We further assume that the scattered fields are subject to the  Silver--M\"{u}ller  radiation condition 
\begin{equation*}
\lim_{|r|\to\infty} |r|\left(\sqrt{\mu}\Hs(r)\times \frac{r}{|r|}-\sqrt{\epsilon}\Es(r)\right)=0.
\end{equation*}

\begin{section}{Integral equation}\label{sec:3}
\begin{subsection}{Functional spaces and surface differential operators }

We first introduce several functional spaces on the aperture $\Gamma=\{(x_1,x_2): (x_1,x_2,0)\in\Gamma_0\}\subset \mathbb{R}^2$.
Let 
\begin{eqnarray*}
\surfaceone  &=& \{f\in H^{-\frac{1}{2}}(\mathbb{R}^2): \text{supp }f\subset \overline{\Gamma}\},\\
\surfacetwo  &=& \{f\in H^{\frac{1}{2}}(\mathbb{R}^2): \text{supp }f\subset \overline{\Gamma}\},\\
H^{\frac{1}{2}}(\Gamma)  &=&\{f|_{\Gamma}: f\in H^{\frac{1}{2}}(\mathbb{R}^2)\},
\end{eqnarray*}
where $\text{supp }f$ denotes the support for $f$, and $\overline{\Gamma}$ denotes the closure of $\Gamma$. 
The corresponding norms are defined as
\begin{eqnarray*}\label{def:onenorm}
\Vert f\Vert_{\surfaceone}^2 &=&  \frac{1}{4\pi^2} \int_{\mathbb{R}^2}  (1+|\xi|^2)^{-\frac{1}{2}} |\hat{f}(\xi)|^2d\xi, \\
\Vert f\Vert_{\surfacetwo}^2 &=&  \frac{1}{4\pi^2} \int_{\mathbb{R}^2}  (1+|\xi|^2)^{\frac{1}{2}} |\hat{f}(\xi)|^2d\xi, \\
\Vert f\Vert_{H^{\frac{1}{2}}(\Gamma)} &=& \inf\{\|f\|_{H^{\frac{1}{2}}(\mathbb{R}^2)}: f\in H^{\frac{1}{2}}(\mathbb{R}^2)\},
\end{eqnarray*}
where
 $$\hat{f}(\xi) = \int_{\mathbb{R}^2} e^{-\ii x\cdot\xi} f(x)dx.$$
  It is noted that $\surfaceone$ is the dual space of $H^{\frac{1}{2}}(\Gamma)$ and vice versa; see \cite{Mclean}. For $f\in \surfaceone$, $h\in H^{\frac{1}{2}}(\Gamma)$, we denote by
$\langle f, h \rangle $
the dual pairing between the two. 
We further introduce the product space $$H = \surfaceone\times \surfaceone,$$and
\begin{eqnarray*}
X = \surfacethree &=&\{\V\in H: \Div \V \in \surfaceone \}.   
\end{eqnarray*}
Then we define the norm
\begin{eqnarray*}
\Vert \V\Vert^2_X&:=& \Vert \V\Vert^2_{\surfaceone\times \surfaceone} +\Vert \Div \V \Vert_{\surfaceone}^2\\
&=& \frac{1}{4\pi^2} \int_{\mathbb{R}^2}  \left(1+|\xi|^2\right)^{-\frac{1}{2}} \left(|\hat{\V}(\xi)|^2+|\widehat{\Div\V}(\xi)|^2\right) d\xi. 
\end{eqnarray*}

Next, we introduce the following representations of the differential operators and their Fourier transforms. By definition, for $\V = (V_1, V_2)\in H$,
\begin{equation*}
\Div \V =  \frac{\partial V_1}{\partial x_1}+ \frac{\partial V_2}{\partial x_2}, \quad \Curl \V = \frac{\partial V_1}{\partial x_2}-\frac{\partial V_2}{\partial x_1}, 
\end{equation*}
thus the Fourier transform
\begin{eqnarray*}
\widehat{\Div \V} &=&  \ii\xi_1 \hat{ V_1}+ \ii\xi_2 \hat{ V_2} =\ii|\xi|\hat{\V}\cdot\hat{\xi},\\
\widehat{\Curl \V}&=&  \ii\xi_2 \hat{ V_1}-  \ii\xi_1 \hat{ V_2} = \ii|\xi| \hat{\V}\cdot{\trans{\hat{\xi}}},
\end{eqnarray*}
where $\xi = (\xi_1,\xi_2)\in\mathbb{R}^2$,  $\trans{\xi} =(\xi_2, -\xi_1)$,  $\hat{\xi}: = \frac{\xi}{|\xi|}$ and $\trans{\hat{\xi}}: = \frac{ \trans{\xi}}{| \trans{\xi}|}$.
Then it follows that $\hat{\V}$ can be represented by
\begin{eqnarray*}
\hat{\V} &=& (\hat{\V}\cdot \hat{\xi}) \hat{\xi}+  (\hat{\V}\cdot \trans{\hat{\xi} })\trans{ \hat{\xi}}\\
&=&\frac{\widehat{\Div \V}}{\ii |\xi|}  \hat{\xi}+\frac{\widehat{\Curl \V}}{\ii |\xi|} \trans{ \hat{\xi}}.
\end{eqnarray*}

%

\end{subsection}
\begin{subsection}{Integral representation}

   In this subsection, we introduce Dyadic Green's functions \cite{tai1994dyadic} to represent the fields. The second kind of dyadic Green’s function  $\upG$ for the upper domain satisfies:
 \begin{equation*}
 \nabla\times \nabla\times \upG(r,r'; k)-k^2\upG(r,r'; k)=I\delta(r-r')
 \end{equation*}
with the boundary condition
$$ \m{e}_3\times \nabla\times\upG(r,r'; k)=0 \text{ on } \partial \updom.$$
The Dyadic Green's function $\m{G}^{(2)}_+$ can be represented by 
   \begin{eqnarray} \label{eq-G}
  \m{G}_+^{(2)}(r,r';k)=\m{G}(r,r';k)+\begin{pmatrix}
  1&0&0\\
  0&1&0\\
  0&0&-1
  \end{pmatrix} 
  \m{G}(\overline{r},r';k),
  \end{eqnarray}
  where
\begin{eqnarray*}
\m{G}(r,r';k)=(I+\frac{1}{k^2}\nabla\nabla)\green(r,r';k),
\end{eqnarray*}
and $\green$ denotes the free space scalar Green's function
\begin{equation} \label{green}
\green(r,r';k)=\frac{e^{\ii k|r-r'|}}{4\pi|r-r'|}.
\end{equation}
For the lower half-space $\downdom$, the second kind of dyadic Green’s function can be represented as
    \begin{eqnarray*}
    \m{G}_-^{(2)}(r,r';k)=\m{G}_+^{(2)}(\overline{r},r';k),
    \end{eqnarray*}
 where  $\overline{r}=(r_1, r_2, -r_3)$. 
For convenience, we drop $k$ in the notation of the Green's function $g$ and the Dyadic Green's functions.
With the second Dyadic Green's function $\upG$, we have the following representation formula \cite{tai1994dyadic, chien2000green} 
for the scattered field in the upper-half space:
  \begin{equation}\label{eq:up}
  \Hs(r)=-\ii k\int_{\Gamma_0}\upG(r,r')\cdot [\m{e}_3\times \totalE^s(r')]ds \text{ for all } r\in\updom.
   \end{equation}
Similarly, in the lower half-space,  
 \begin{equation}\label{eq:down}
\Hs(r)=\ii k\int_{\Gamma_0}\downG(r,r')\cdot [\m{e}_3\times \totalE^s(r')]ds \text{ for all } r\in\downdom.
  \end{equation}
We introduce the notation $\W(r) =  \m{e}_3\times \totalE^s(r)|_{r\in \Gamma_0}=\m{e}_3\times \totalE(r)|_{r\in \Gamma_0}$.
With the continuity of the tangential component of the total field $\totalH$, it follows from equations \eqref{eq:up} and \eqref{eq:down} that
\begin{equation*}
-\ii k \m{e}_3\times \lim_{r\in\updom,r\to \Gamma_0} \int_{\Gamma_0}\upG(r,r')\cdot \W(r')ds + \m{e}_3\times (\Hi+ \Hr) = \ii k \m{e}_3\times \lim_{r\in\downdom,r\to \Gamma_0} \int_{\Gamma_0}\downG(r,r')\cdot\W(r')ds 
\end{equation*}
which leads to 
\begin{equation*}
2\ii k \m{e}_3\times \lim \int_{\Gamma_0}\upG(r,r')\cdot \W(r')ds = \m{e}_3\times (\Hi+ \Hr),\quad r\in \Gamma_0.
\end{equation*}
Using \eqref{eq-G}, one further obtains
\begin{equation}\label{eq:thinboard}
4\ii k(\int_{\Gamma_0} \W(r') g(r,r') ds +\frac{1}{k^2} \int_{\Gamma_0} \Div \W(r') \nabla_{ r}g(r,r') ds ) =  \m{e}_3\times (\Hi+ \Hr).
\end{equation}
Note that the above integral equation is exactly the same as the one derived by Bethe in his seminar paper \cite{bethe1944theory} using fictitious magnetic charges and currents.

We now proceed to analyze the integral equation \eqref{eq:thinboard}.  We shall view $\W(\cdot)$ as a vector field defined on $\Gamma$, and still denote it as $\W$. That is, we write $\W(\x)=(W_1(x),W_2(x))$ for  $\W(r)=(W_1(r),W_2(r),0)$, where $\x=(r_1,r_2)\in\Gamma$ for $r=(r_1,r_2,0)$. With the notation $\x= (r_1,r_2)$, $\x'= (r'_1,r'_2)$ corresponding to  $r=(r_1,r_2,0)$ and $r'=(r'_1,r'_2, 0)$ respectively,  we also introduce a function $\g(\cdot,\cdot)$ defined on $\mathbb{R}^2\times\mathbb{R}^2$ by setting
\begin{eqnarray*}
\g(\x,\x') =g(r,r').
\end{eqnarray*}
 Then we introduce the following integral operator 
\begin{equation*}
L[\W](\x) =- k^2\int_{\Gamma} \W(\x') \g(\x,\x') ds - \int_{\Gamma} \Div \W(\x') \nabla_{\Gamma, \x}\g(\x,\x') ds,
\end{equation*}
where the surface gradient operator $\nabla_{\Gamma}$ is defined as 
\begin{equation*}
\nabla_{\Gamma} f(x) = (\frac{\partial f}{\partial x_1},  \frac{\partial f}{\partial x_2}),\quad x = (x_1,x_2).
\end{equation*}
Denoting $\Y(\x)= (Y_1, Y_2)$ where $ (Y_1,Y_2,0) =-\m{e}_3\times (\Hi+ \Hr)(r)$, $r=(r_1,r_2,0)\in\Gamma_0$ and $\x=(r_1,r_2)\in\Gamma$, one can deduce the variational form of equation \eqref{eq:thinboard}:
\begin{equation}\label{eq:var}
(L[\W], \V) = (\Y, \V),\quad \forall\   \V\in X, 
\end{equation}
where 
\[
(\Y, \V) =: \langle \Y, \overline {\V} \rangle.
\]
We next analyze the well-posedness of the variational problem \eqref{eq:var}. To do so, 
we need the following lemma on the estimates of the Fourier transform of a function in $H^{-\frac{1}{2}}(\mathbb{R}^2)$.
 Note that we use $C$ to denote a generic constant. The value of the generic constant is not required and may change step by step, but its meaning will always be clear from the context.

\begin{lemma}\label{lem:linfty}
For $f\in H^{-\frac{1}{2}}(\mathbb{R}^2)$ with $\text{supp }f\subset \Gamma$, there exists a constant $C$ depending on $k$ such that for all $|\xi| \leq 2k$,
\begin{equation*}
|\hat{f}(\xi)| \leq C \Vert f\Vert_{H^{-\frac{1}{2}} (\mathbb{R}^2)}.
\end{equation*}

\end{lemma}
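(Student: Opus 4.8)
The plan is to exploit the compact support of $f$ in $\overline{\Gamma}$, which forces $\hat f$ to be (the restriction of) an entire function of exponential type, and in particular to be controlled on bounded frequency regions by a Sobolev norm that a priori only controls high frequencies weakly. The key observation is that for $|\xi| \le 2k$ the weight $(1+|\xi|^2)^{-1/2}$ appearing in the $H^{-1/2}$ norm is bounded below by a positive constant $c_k = (1+4k^2)^{-1/2}$, so $\|f\|^2_{H^{-1/2}(\R^2)} \ge \frac{c_k}{4\pi^2}\int_{|\eta|\le 2k}|\hat f(\eta)|^2 d\eta$; thus it suffices to bound $|\hat f(\xi)|$ pointwise, for $|\xi|\le 2k$, by the $L^2$ norm of $\hat f$ over a fixed ball, say $\{|\eta|\le 3k\}$. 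This is a mean-value / sub-mean-value type estimate for the entire function $\hat f$.

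First I would fix a cutoff $\chi \in C_c^\infty(\R^2)$ with $\chi \equiv 1$ on a neighbourhood of $\overline{\Gamma}$ and $\mathrm{supp}\,\chi$ contained in a slightly larger bounded set; then $f = \chi f$ as distributions, so $\hat f = (2\pi)^{-2}\,\hat\chi * \hat f$ (with the convention of the paper, up to the appropriate constant). Since $\chi$ is a fixed Schwartz function, $\hat\chi$ decays faster than any polynomial; in particular $|\hat\chi(\zeta)| \le C_N (1+|\zeta|)^{-N}$ for every $N$. Next I would write, for $|\xi|\le 2k$,
\[
|\hat f(\xi)| \;\le\; C \int_{\R^2} |\hat\chi(\xi-\eta)|\,|\hat f(\eta)|\,d\eta
\;=\; C\Big(\int_{|\eta|\le 3k} + \int_{|\eta|>3k}\Big)|\hat\chi(\xi-\eta)|\,|\hat f(\eta)|\,d\eta,
\]
and estimate the two pieces separately. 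For the near piece, Cauchy--Schwarz gives $C\|\hat\chi\|_{L^2}\big(\int_{|\eta|\le 3k}|\hat f(\eta)|^2 d\eta\big)^{1/2} \le C_k \|f\|_{H^{-1/2}(\R^2)}$ by the weight bound above. For the far piece, on $\{|\eta|>3k\}$ and $|\xi|\le 2k$ we have $|\xi-\eta|\ge k$ and also $|\xi-\eta|\ge |\eta|/3$, so choosing $N$ large ($N=3$ suffices) we get $|\hat\chi(\xi-\eta)| \le C_N (1+|\eta|)^{-3}$, whence by Cauchy--Schwarz $C\big(\int (1+|\eta|)^{-6}(1+|\eta|)\, d\eta\big)^{1/2}\big(\int (1+|\eta|)^{-1}|\hat f(\eta)|^2 d\eta\big)^{1/2}$, and $\int_{\R^2}(1+|\eta|)^{-5}d\eta < \infty$ while the second factor is exactly $2\pi\|f\|_{H^{-1/2}(\R^2)}$. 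Combining the two pieces yields $|\hat f(\xi)|\le C\|f\|_{H^{-1/2}(\R^2)}$ with $C$ depending only on $k$ (through $\chi$ and the ball radius), as claimed.

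The main obstacle, and the only point that needs a little care, is the far-field tail: the bound $|\hat f(\eta)| \lesssim (1+|\eta|)^{\text{small}}$ that one might naively hope for is false for general $H^{-1/2}$ distributions, so one cannot simply integrate $|\hat f|$ against $|\hat\chi|$ using an $L^\infty$ bound on $\hat f$. The resolution is precisely to keep $\hat f$ paired with the Sobolev weight via Cauchy--Schwarz and to spend the abundant decay of $\hat\chi$ to absorb the growth of $(1+|\eta|)^{1/2}$; this is why one needs $\chi$ smooth (Schwartz-class $\hat\chi$) rather than merely, say, the indicator of a box. A secondary bookkeeping point is to make sure the convolution identity $\hat f = c\,\hat\chi*\hat f$ is justified for $f$ only a tempered distribution — this is standard since $\chi f = f$ and multiplication by a Schwartz function is continuous on $\mathcal{S}'$, with the convolution on the Fourier side being the usual convolution of a Schwartz function with a tempered distribution, which here is represented by an actual locally integrable (indeed real-analytic) function because $f$ has compact support.
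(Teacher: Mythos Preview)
Your argument is correct, but it takes a genuinely different route from the paper. Both proofs introduce a smooth cutoff $\chi\in C_c^\infty(\R^2)$ with $\chi\equiv 1$ on $\overline{\Gamma}$, but use it differently. The paper's proof is a two-line duality argument: since $f\in\surfaceone$, one writes $\hat f(\xi)=\langle e^{-\ii x\cdot\xi},f\rangle_{(H^{1/2}(\Gamma),\,\tilde H^{-1/2}(\Gamma))}$ and then simply bounds $\|e^{-\ii x\cdot\xi}\|_{H^{1/2}(\Gamma)}\le\|e^{-\ii x\cdot\xi}\chi\|_{H^{1/2}(\R^2)}\le C$ uniformly for $|\xi|\le 2k$. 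You instead pass the cutoff to the Fourier side via $\hat f=(2\pi)^{-2}\hat\chi*\hat f$ and run a near/far frequency decomposition with Cauchy--Schwarz. Your approach is longer but makes the dependence on the Sobolev weight completely explicit and would generalize mechanically to any $H^s$, $s\in\R$; the paper's duality argument is shorter but hides the same Fourier-side estimate inside the single bound $\|e^{-\ii x\cdot\xi}\chi\|_{H^{1/2}(\R^2)}\le C$. One small inaccuracy in your write-up: the second Cauchy--Schwarz factor in the far piece is not \emph{exactly} $2\pi\|f\|_{H^{-1/2}}$, since the paper's norm uses the weight $(1+|\eta|^2)^{-1/2}$ rather than $(1+|\eta|)^{-1}$; but the two weights are equivalent, so the conclusion is unaffected.
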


\begin{proof}
By definition,
$\hat{f}(\xi)= \langle e^{-\ii x\cdot\xi}, f\rangle_{(H^{1/2}(\Gamma), \Tilde{H}^{-1/2}(\Gamma))}$. Let $\chi(x)$ be a smooth cut-off function with a compact support such that $\chi(x)=1$ on $\Gamma$. Then we have
\[
\|e^{-\ii x\cdot\xi}\|_{H^{1/2}(\Gamma)} \leq \|e^{-\ii x\cdot\xi} \chi(x)\|_{H^{1/2}(\mathbb{R}^2)}
\leq C
\]
for some positive constant $C$ that depends on $k$. It follows that $|\hat{f}(\xi)| \leq C \Vert f\Vert_{H^{-\frac{1}{2}} (\mathbb{R}^2)}$. 
\end{proof}

We then present a property of the scalar function $g_0$. Recall that for $r, r'\in\mathbb{R}^3$, $g(r,r')$
satisfies
\begin{equation}\label{eqn:ggreen}
\Delta g(r,r') +k^2 g(r,r') = -\delta(r-r').
\end{equation}
Since
\begin{eqnarray*}
\g(\x,\x') =g(r,r'),
\end{eqnarray*}
it follows \eqref{eqn:ggreen} that
\begin{eqnarray*}
\g(\x,\x')  = \int_{\mathbb{R}^2} \hatg(\xi) e^{\ii(\x-\x')\cdot\xi}d\xi,
\end{eqnarray*}
with
\begin{equation*}
\hatg(\xi) = \frac{\ii}{\sqrt{k^2-|\xi|^2}}.
\end{equation*}
We also have
\begin{equation*}
\W(x) = \int_{\mathbb{R}^2} \hat{\W}(\xi) e^{\ii\x\cdot\xi}d\xi.
\end{equation*}

Let us define a bilinear form $B: X\times X \to \mathbb{C}$ as
\begin{equation*}
B(\W, \V) = (L[\W], \V).
\end{equation*}
In this section, we represent the Fourier transform $\hat{\W}$ of $\W = (W_1, W_2)$ and   $\hat{\V}$ of $\V = (V_1,V_2)$ as 
$\hat{\W} = a_1\hat{\xi}+ a_2 \trans{\hat{\xi}}$ and $  \hat{\V} = b_1\hat{\xi}+b_2 \trans{\hat{\xi}}$ respectively.
Then it follows the definition of operator $L$ that
\begin{eqnarray}
B(\W, \V)  &=& -k^2 \int_{\mathbb{R}^2} \hat\green(\xi)\hat{\W}\cdot\overline{\hat{\V}}d\xi \nonumber \\
&&- \int_{\mathbb{R}^2} \ii (\xi_1 \hat{W_1}+ \xi_2\hat{ W_2})( \ii\xi_1\hatg(\xi)\bar{\hat{V}}_1+\ii\xi_2\hatg(\xi)\bar{\hat{V}}_2)d\xi \nonumber\\
&&= \int_{\mathbb{R}^2}  \hatg(\xi)\left(\left(-k^2 + |\xi|^2 \right)a_1\bar{b}_1 - k^2 a_2 \bar{b}_2\right)d\xi.\label{eq:BWV}
\end{eqnarray}

To analyze the existence and uniqueness of the solution to the variational formulation \eqref{eq:var}, we recall the following abstract theorem.

\begin{theorem}\label{thm:fred}\cite[Theorem 5.4.5]{nedelec2001acoustic}
Let $V$ and $W$ be two Hilbert spaces. Let $a(u,v)$ be a bilinear form continuous on $V\times V$ which satisfies
\begin{equation*}
\Re [a(u,v)] \geq \alpha \Vert u\Vert^2_V -c\Vert u \Vert_{V_1}^2,\, \forall v\in V,
\end{equation*}
where $V_1$ is a Hilbert space containing $V$. Let $b(q, v)$ be a bilinear form continuous on $W\times V$ which satisfies
\begin{equation*}
\sup_{\Vert u\Vert_V = 1}\Re [b(q,u)] \geq \beta \Vert q\Vert_W -c\Vert q \Vert_{W_1},\, \forall q\in W,
\end{equation*}
where $W_1$ is a Hilbert space containing $W$.
Consider the following variational problem, 
\begin{eqnarray*}
\begin{array}{rlll}
a(u,v)+b(p,v) &=& (g_1, v)\, &\forall v\in V,\\
b(q,u) &=& (g_2, q) \, &\forall q\in W.
\end{array}
\end{eqnarray*}
where $g_1\in V^\star$, $g_2\in W^\star$.
Denote by $V_0$ the kernel of the bilinear form $b$ in $V$, i.e., 
\begin{equation*}
V_0 = \{u\in V: b(q, u) = 0,\, \forall q\in W\}.
\end{equation*}
Suppose that the injection from $V_0$ into $V_1$ is compact and that the injection from $W$ into $W_0$ is compact. Suppose that there exists an element $u_{g_2}\in V$ such that
$$b(q,u_{g_2}) = (g_2, q),\forall q\in V,$$
then the variational problem satisfies the Fredholm alternative.

\end{theorem}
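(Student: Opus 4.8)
The plan is to derive this abstract result by the classical reduction of a saddle point system to its constraint kernel, followed by Fredholm theory for G\r{a}rding forms. I read the hypotheses in the customary way: the first coercivity estimate is $\Re[a(u,u)]\ge\alpha\|u\|_V^2-c\|u\|_{V_1}^2$ for all $u\in V$, the second compact injection is $W\hookrightarrow W_1$, and the lifting hypothesis holds for every datum $g_2\in W^\star$, i.e. the map $v\mapsto b(\cdot,v)$ is onto $W^\star$.

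First I would use a lift $u_{g_2}$ to dispose of the constraint equation. Writing $u=u_{g_2}+\tilde u$ turns $b(q,u)=(g_2,q)$ into $\tilde u\in V_0$, and testing the first equation only against $v\in V_0$ (where $b(p,v)$ drops out) isolates the reduced problem
\[
a(\tilde u,v)=\ell(v):=(g_1,v)-a(u_{g_2},v),\qquad\forall\,v\in V_0,
\]
together with a recovery step that must produce $p$ from $\tilde u$. On $V_0\times V_0$ the form $a$ is continuous and satisfies the stated G\r{a}rding inequality, so the shifted form $a^+(u,v):=a(u,v)+c\,(u,v)_{V_1}$ is coercive there; Lax--Milgram makes its operator an isomorphism $V_0\to V_0^\star$, while compactness of $V_0\hookrightarrow V_1$ makes $u\mapsto c\,(u,\cdot)_{V_1}$ a compact operator $V_0\to V_0^\star$. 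Hence the operator associated with $a|_{V_0\times V_0}$ is an isomorphism plus a compact perturbation, i.e. Fredholm of index zero, and so is its adjoint; the reduced problem therefore obeys the Fredholm alternative, with finite-dimensional kernel $N_0$ and adjoint kernel $N_0^\star$ of equal dimension, solvable exactly when $\ell$ vanishes on $N_0^\star$.

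The step I expect to be the main obstacle is recovering $p$. Put $B\colon W\to V^\star$, $Bq=b(q,\cdot)$, with adjoint $B^\star\colon V\to W^\star$ whose kernel is precisely $V_0$. The trouble is that the hypothesis on $b$ gives the inf--sup estimate $\|Bq\|_{V^\star}\ge\beta\|q\|_W-c\|q\|_{W_1}$ only up to the compact term $c\|q\|_{W_1}$, so it does not directly yield a bounded-below operator. I would upgrade it to ``$B$ has closed range'' by a compactness argument: if $(q_n)$ is bounded in $W$ with $(Bq_n)$ Cauchy in $V^\star$, then after a subsequence $(q_n)$ is Cauchy in $W_1$ by the compact injection, and $\beta\|q_n-q_m\|_W\le\|Bq_n-Bq_m\|_{V^\star}+c\|q_n-q_m\|_{W_1}$ forces $(q_n)$ to be Cauchy in $W$; the standard conclusion is that $\ker B$ is finite dimensional and $\mathrm{Range}(B)$ is closed. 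The closed range theorem then gives $\mathrm{Range}(B)=(\ker B^\star)^\circ=V_0^\circ$, the annihilator of $V_0$ in $V^\star$, and the lifting hypothesis (surjectivity of $B^\star$) forces $\ker B=\{0\}$, so $B\colon W\to V_0^\circ$ is an isomorphism. Since any solution $\tilde u$ of the reduced problem makes $v\mapsto\ell(v)-a(\tilde u,v)$ vanish on $V_0$, this functional lies in $V_0^\circ$ and hence equals $Bp$ for a unique $p\in W$, and $(\tilde u,p)$ solves the full system.

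Finally I would assemble the Fredholm alternative for the coupled problem: solvability is equivalent to the finitely many conditions ``$\ell$ vanishes on $N_0^\star$'' on the data, the solution set is the finite-dimensional affine space obtained by translating $\{(\tilde u,p):\tilde u\in N_0,\ Bp=-a(\tilde u,\cdot)\}$, and the same reduction applied to the adjoint saddle point problem shows its kernel has the same dimension, so the saddle point operator $V\times W\to V^\star\times W^\star$ is Fredholm of index zero. Beyond the closed-range argument of the previous paragraph, the only real care needed is this last bookkeeping of the direct and adjoint kernels; everything else is routine Lax--Milgram and Fredholm theory.
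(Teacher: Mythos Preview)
The paper does not supply a proof of this theorem at all: it is quoted verbatim as \cite[Theorem 5.4.5]{nedelec2001acoustic} and used as a black box to conclude Theorem~\ref{thm:fredholm}. So there is no ``paper's own proof'' to compare against.

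For what it is worth, your sketch is the standard and correct route to such an abstract result: lift to kill the constraint, apply G\r{a}rding/Fredholm on $V_0$, then recover the multiplier $p$ via a closed-range argument for $B$. Two minor remarks. First, the lifting hypothesis as written in the statement is only for the particular datum $g_2$, not for all of $W^\star$; you are right to flag that you are reading it ``in the customary way,'' and with only the single-lift assumption you still get the reduction $u=u_{g_2}+\tilde u$, while uniqueness of $p$ then relies on $\ker B=\{0\}$, which you extract from the inf--sup-plus-compactness estimate rather than from surjectivity of $B^\star$. Second, once you have shown $B$ has closed range and finite-dimensional kernel from the generalized inf--sup condition, the identity $\mathrm{Range}(B)=V_0^{\circ}$ follows directly from $\ker B^\star=V_0$ and the closed range theorem, exactly as you say; the rest is bookkeeping. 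None of this is at odds with the paper, which simply invokes the result.
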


Now we formulate the saddle point problem corresponding to the variational formulation \eqref{eq:var}. Introduce the surface curl operator $\surfacecurl$:
\begin{equation*}
\surfacecurl  f(x) = \frac{\partial f}{\partial x_2} \hat{x}_1- \frac{\partial f}{\partial x_1} \hat{x}_2,\quad x = (x_1,x_2). 
\end{equation*}
We first decompose the function $\W = \U+\surfacecurl p$, where $p\in X_2 =  H^{\frac{1}{2}}(\mathbb{R}^2)$ and $\text{supp } p\subset \Gamma$.
We introduce a bilinear operator $$E(p,\V) = B(\surfacecurl p,\V).$$ Then a saddle point formulation associated with \eqref{eq:var} is
\begin{eqnarray*}
\begin{array}{rlll}
B(\U, \V)+  E( p, \V)&=& (\Y , \V),\quad &\forall \V\in X,\\
E(q, \U)&=&0,\quad &\forall q\in X_2.
\end{array}
\end{eqnarray*}

To prove that the variational problem \eqref{eq:var} satisfies the Fredholm alternative, we will present several lemmas on the bilinear operators in  \eqref{eq:var}. Throughout this work, the notation $a \lesssim b$ ($a \gtrsim b$) denotes $a \leq Cb$ ($a \geq Cb$) for some generic constant $C > 0$.  Throughout this work, we will use the notation $\lesssim$ ($\gtrsim$) when we are not concerned with the specific dependence of the generic constant.
\begin{lemma}\label{cond0}
The bilinear operator $B: X\times X \to \mathbb{C}$ satisfies:
\begin{equation}\label{saddle:B}
\Re[B(\U, {\U})] \gtrsim  \Vert \U\Vert_X^2 - \Vert \U\Vert_H^2. 
\end{equation}
\end{lemma}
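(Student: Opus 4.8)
The plan is to work entirely on the Fourier side, using the representation \eqref{eq:BWV} with $\U$ in place of both arguments. Writing $\hat{\U} = a_1\hat\xi + a_2\trans{\hat\xi}$, we have
\[
B(\U,\U) = \int_{\mathbb{R}^2} \hatg(\xi)\Big((-k^2+|\xi|^2)|a_1|^2 - k^2|a_2|^2\Big)\,d\xi,
\qquad \hatg(\xi) = \frac{\ii}{\sqrt{k^2-|\xi|^2}}.
\]
The key observation is that $\hatg(\xi)$ is purely imaginary for $|\xi| < k$ (so its real part vanishes there) and purely real for $|\xi| > k$, where $\hatg(\xi) = 1/\sqrt{|\xi|^2-k^2}$. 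Hence $\Re[B(\U,\U)]$ comes only from the region $|\xi| > k$, where, keeping the minus sign from $\hatg$ being $\ii$ times a negative-imaginary quantity, one gets
\[
\Re[B(\U,\U)] = \int_{|\xi|>k} \frac{1}{\sqrt{|\xi|^2-k^2}}\Big((|\xi|^2-k^2)|a_1|^2 + k^2|a_2|^2\Big)\,d\xi
= \int_{|\xi|>k} \Big(\sqrt{|\xi|^2-k^2}\,|a_1|^2 + \frac{k^2}{\sqrt{|\xi|^2-k^2}}|a_2|^2\Big)\,d\xi,
\]
which is manifestly nonnegative. (I would double-check the global sign against \eqref{eq:BWV}; the essential point is that the high-frequency part is sign-definite.)

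Next I would show this high-frequency integral controls $\|\U\|_X^2$ up to the $\|\U\|_H^2$ correction. Recall
\[
\|\U\|_X^2 = \frac{1}{4\pi^2}\int_{\mathbb{R}^2}(1+|\xi|^2)^{-1/2}\big(|a_1|^2+|a_2|^2 + |\widehat{\Div\U}|^2\big)\,d\xi,
\qquad |\widehat{\Div\U}|^2 = |\xi|^2|a_1|^2.
\]
So $\|\U\|_X^2 \simeq \int (1+|\xi|^2)^{-1/2}\big((1+|\xi|^2)|a_1|^2 + |a_2|^2\big)\,d\xi = \int \big((1+|\xi|^2)^{1/2}|a_1|^2 + (1+|\xi|^2)^{-1/2}|a_2|^2\big)\,d\xi$. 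For $|\xi| > 2k$ (say) one has the elementary comparisons $\sqrt{|\xi|^2-k^2} \gtrsim (1+|\xi|^2)^{1/2}$ and $k^2/\sqrt{|\xi|^2-k^2} \gtrsim (1+|\xi|^2)^{-1/2}$ with constants depending only on $k$, so the integrand of $\Re[B(\U,\U)]$ dominates that of $\|\U\|_X^2$ on $\{|\xi|>2k\}$. It remains to absorb the region $\{|\xi| \le 2k\}$ into $\|\U\|_H^2$: there, $(1+|\xi|^2)^{\pm 1/2}$ and $(1+|\xi|^2)^{-1/2}$ are all comparable to constants depending on $k$, so $\int_{|\xi|\le 2k}\big((1+|\xi|^2)^{1/2}|a_1|^2 + (1+|\xi|^2)^{-1/2}|a_2|^2\big)\,d\xi \lesssim \int_{|\xi|\le 2k}(|a_1|^2+|a_2|^2)\,d\xi \lesssim \|\U\|_H^2$, using that $|a_1|^2+|a_2|^2 = |\hat\U|^2$ and $(1+|\xi|^2)^{-1/2} \gtrsim 1$ on this compact set. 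Combining, $\|\U\|_X^2 \lesssim \Re[B(\U,\U)] + \|\U\|_H^2$, which is \eqref{saddle:B}.

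The main obstacle I anticipate is purely bookkeeping: getting the sign of $\Re[B(\U,\U)]$ right (tracking the $\ii$ in $\hatg$ through the $-k^2$ prefactors), and being careful that the integral $\int_{|\xi|>k}\sqrt{|\xi|^2-k^2}\,|a_1|^2\,d\xi$ is finite — but this is guaranteed since $\U\in X$ forces $\widehat{\Div\U}=|\xi|\,a_1 \in L^2((1+|\xi|^2)^{-1/2}d\xi)$, so $\sqrt{|\xi|^2-k^2}\,|a_1|^2 \lesssim (1+|\xi|^2)^{1/2}|a_1|^2 \lesssim (1+|\xi|^2)^{-1/2}|\xi|^2|a_1|^2$ for large $|\xi|$, which is integrable. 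There is also a mild subtlety at $|\xi|=k$, where $\hatg$ is singular, but the singularity is of order $|\xi|-k|^{-1/2}$, integrable in two dimensions, and in any case the relevant term $k^2/\sqrt{|\xi|^2-k^2}$ only appears on the $|a_2|^2$ part where one does not need a lower bound near $|\xi|=k$; one simply discards it (it is $\ge 0$) and keeps the contribution from $|\xi|>2k$. So no genuine difficulty there. I would state the argument as a short computation on the Fourier side followed by the two elementary frequency-localized comparisons above.
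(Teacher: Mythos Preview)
Your computation has a sign error that matters. With $\hatg(\xi) = \ii/\sqrt{k^2-|\xi|^2}$ and the outgoing branch $\sqrt{k^2-|\xi|^2} = \ii\sqrt{|\xi|^2-k^2}$ for $|\xi|>k$, one gets $\hatg(\xi) = 1/\sqrt{|\xi|^2-k^2}>0$. Plugging into \eqref{eq:BWV} with $\W=\V=\U$ gives
\[
\Re[B(\U,\U)] = \int_{|\xi|>k}\Big(\sqrt{|\xi|^2-k^2}\,|a_1|^2 \;-\; \frac{k^2}{\sqrt{|\xi|^2-k^2}}\,|a_2|^2\Big)\,d\xi,
\]
with a \emph{minus} in front of the $|a_2|^2$ term --- exactly as in the paper. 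So $\Re[B(\U,\U)]$ is \emph{not} manifestly nonnegative, and your plan to ``simply discard'' the $a_2$ contribution near $|\xi|=k$ (because ``it is $\ge 0$'') fails: that term is what has to be discarded, but it is negative, so discarding it goes the wrong way.

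The correct argument must absorb the negative $a_2$ term into $\Vert\U\Vert_H^2$. On $|\xi|\ge 2k$ this is easy, since $k^2/\sqrt{|\xi|^2-k^2}\lesssim (1+|\xi|^2)^{-1/2}$. But on the annulus $k<|\xi|<2k$ the weight blows up like $(|\xi|-k)^{-1/2}$, and the bound $\int_{k<|\xi|<2k}|a_2|^2\,d\xi \lesssim \Vert\U\Vert_H^2$ that you propose does \emph{not} control $\int_{k<|\xi|<2k}\frac{k^2}{\sqrt{|\xi|^2-k^2}}|a_2|^2\,d\xi$. The paper closes this gap with Lemma~\ref{lem:linfty}: because $\U$ is supported in $\overline\Gamma$, one has the pointwise bound $|\hat\U(\xi)|\lesssim \Vert\U\Vert_H$ for $|\xi|\le 2k$, hence $|a_2(\xi)|^2\lesssim \Vert\U\Vert_H^2$ there, and then integrability of $(|\xi|^2-k^2)^{-1/2}$ over the annulus finishes the estimate. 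Your proposal misses this step entirely because the sign error led you to believe the troublesome term could be dropped rather than bounded.
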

\begin{proof}
 Represent $\U$  as 
$\U= a_1\hat{\xi} + a_2\trans{\hat{\xi}}$, where
\begin{equation*}
a_1 = \frac{\widehat{\Div \U}}{\ii|\xi|}, a_2 =  \frac{\widehat{\Curl \U}}{\ii|\xi|}.
\end{equation*} 
We have
\begin{eqnarray*}
\Re[B(\U, {\U})]  
&=&\int_{|\xi|\geq k}  \sqrt{|\xi|^2-k^2}|a_1|^2 -
\frac{k^2}{\sqrt{|\xi|^2-k^2}} |a_2|^2 d\xi \\
&=&\int_{|\xi|\geq 2k}  \sqrt{|\xi|^2-k^2}|a_1|^2 -
\frac{k^2}{\sqrt{|\xi|^2-k^2}} |a_2|^2 d\xi \\
&&+\int_{k\leq |\xi| \leq 2k}  \sqrt{|\xi|^2-k^2}|a_1|^2 -\frac{k^2}{\sqrt{|\xi|^2-k^2}} |a_2|^2 d\xi. 
\end{eqnarray*}
Recall that
\begin{eqnarray*}
\Vert \U\Vert_H^2 &= &\int_{\mathbb{R}^2}  (1+|\xi|^2)^{-\frac{1}{2}}\left( |a_1|^2+ |a_2|^2 \right)d\xi,\\
\Vert \Div \U\Vert_{H^{-\frac{1}{2}}(\Gamma)}^2&=&\int_{\mathbb{R}^2}  (1+|\xi|^2)^{-\frac{1}{2}} |\widehat{\Div \U}(\xi)|^2d\xi\\
&=&\int_{\mathbb{R}^2}  (1+|\xi|^2)^{-\frac{1}{2}} |\xi|^2|a_1|^2d\xi. 
\end{eqnarray*}
Since for $\xi \geq 2k$,
\[
\sqrt{|\xi|^2-k^2} 
 \gtrsim  (1+|\xi|^2)^{-\frac{1}{2}} |\xi|^2, \quad
\sqrt{|\xi|^2-k^2}   \gtrsim 
(1+|\xi|^2)^{-\frac{1}{2}},
\]
We have the following estimate 
\begin{eqnarray*}
\int_{|\xi|\geq 2k} \sqrt{|\xi|^2-k^2} |a_1|^2d\xi 
 &\gtrsim & \int_{|\xi|\geq 2k}  (1+|\xi|^2)^{-\frac{1}{2}} |\xi|^2|a_1|^2d\xi = \Vert \Div \U\Vert_{H^{-\frac{1}{2}}(\Gamma)}^2 -\int_{|\xi|< 2k}  (1+|\xi|^2)^{-\frac{1}{2}} |\xi|^2|a_1|^2d\xi , \\
\int_{|\xi|\geq 2k} 
\frac{k^2}{\sqrt{|\xi|^2-k^2}} |a_2|^2 d\xi & \lesssim &
\int_{|\xi|\geq 2k} (1+|\xi|^2)^{-\frac{1}{2}} |a_2|^2d\xi \leq \Vert \U\Vert_H^2. 
\end{eqnarray*}

On the other hand, using Lemma \ref{lem:linfty}, we have 
\begin{eqnarray*}
\int_{|\xi|< 2k}  (1+|\xi|^2)^{-\frac{1}{2}} |\xi|^2|a_1|^2d\xi &\lesssim & \int_{|\xi|< 2k}  (1+|\xi|^2)^{-\frac{1}{2}} |\xi|^2\Vert \U\Vert_H^2d\xi \lesssim \Vert \U\Vert_H^2,\\
\int_{k\leq |\xi| \leq 2k}  \sqrt{|\xi|^2-k^2}|a_1|^2 d\xi & \lesssim & \int_{k\leq |\xi| \leq 2k} 2k \Vert \U\Vert_H^2 d\xi \lesssim \Vert \U\Vert_H^2,\\
\int_{k\leq |\xi| \leq 2k} \frac{k^2}{\sqrt{|\xi|^2-k^2}} |a_2|^2 d\xi & \lesssim & \int_{k\leq |\xi| \leq 2k}\frac{k^2}{\sqrt{|\xi|^2-k^2}}  \Vert \U\Vert_H^2 d\xi  \lesssim  \Vert \U\Vert_H^2.
\end{eqnarray*}

Therefore we can conclude that 
$$\Re[B(\U, {\U})]   \gtrsim   \Vert \Div \U\Vert_{\tilde{H}^{-\frac{1}{2}}(\Gamma)}^2-\Vert \U\Vert_H^2.$$
\end{proof}
The next Lemma is on the properties of the bilinear operator $E$.
\begin{lemma}\label{cond1}
The bilinear operator $E: \surfacetwo\times X\to \mathbb{C}$ satisfies
\begin{equation} \label{eq-est}
\max_{\Vert U\Vert_X = 1}\Re [E(q,\U) ] \gtrsim \Vert q\Vert_{\surfacetwo } - \Vert q\Vert_{\surfaceone}.
\end{equation}
\end{lemma}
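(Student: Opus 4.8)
The plan is to establish \eqref{eq-est} by choosing the test field $\U$ proportional to $-\surfacecurl q$. First I would record the basic facts about this field: for $q\in\surfacetwo$ one has $\surfacecurl q\in X$ because $\Div(\surfacecurl q)=0$, so $\|\surfacecurl q\|_X=\|\surfacecurl q\|_H$, and from $\widehat{\surfacecurl q}(\xi)=\ii|\xi|\,\hat q(\xi)\,\trans{\hat{\xi}}$ one gets the identity
\[
\|\surfacecurl q\|_X^2=\frac{1}{4\pi^2}\int_{\mathbb{R}^2}(1+|\xi|^2)^{-\frac{1}{2}}|\xi|^2|\hat q(\xi)|^2\,d\xi=\|q\|_{\surfacetwo}^2-\|q\|_{\surfaceone}^2\le\|q\|_{\surfacetwo}^2 .
\]
Since $\widehat{\surfacecurl q}$ has no component along $\hat\xi$, substituting $\W=\surfacecurl q$ and $\V=-\surfacecurl q$ into \eqref{eq:BWV} makes the $a_1\bar b_1$ term drop out and leaves
\[
E(q,-\surfacecurl q)=-B(\surfacecurl q,\surfacecurl q)=k^2\int_{\mathbb{R}^2}\hatg(\xi)\,|\xi|^2\,|\hat q(\xi)|^2\,d\xi ,
\]
whose real part equals $k^2\int_{|\xi|>k}\frac{|\xi|^2}{\sqrt{|\xi|^2-k^2}}\,|\hat q(\xi)|^2\,d\xi\ge 0$, since $\Re[\hatg(\xi)]=0$ for $|\xi|<k$ and $\Re[\hatg(\xi)]=(|\xi|^2-k^2)^{-\frac{1}{2}}$ for $|\xi|>k$.

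Next I would establish the energy-type estimate $\Re[E(q,-\surfacecurl q)]\gtrsim\|q\|_{\surfacetwo}^2-C\|q\|_{\surfaceone}^2$, arguing as in the proof of Lemma~\ref{cond0}. Dropping the nonnegative contribution of $\{k<|\xi|\le 2k\}$ and using $\sqrt{|\xi|^2-k^2}\le|\xi|\le(1+|\xi|^2)^{\frac{1}{2}}$ on $\{|\xi|>2k\}$,
\[
\Re[E(q,-\surfacecurl q)]\ge k^2\int_{|\xi|>2k}\frac{|\xi|^2}{(1+|\xi|^2)^{\frac{1}{2}}}|\hat q(\xi)|^2\,d\xi ,
\]
and the right-hand side equals $4\pi^2k^2(\|q\|_{\surfacetwo}^2-\|q\|_{\surfaceone}^2)$ minus the integral of the same weight over the bounded disc $\{|\xi|\le 2k\}$. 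On that disc the weight $\hatg$ contributes no real part, but since $q$ has support in $\overline\Gamma$ we have $q\in H^{-\frac{1}{2}}(\mathbb{R}^2)$ with $\|q\|_{H^{-\frac{1}{2}}(\mathbb{R}^2)}=\|q\|_{\surfaceone}$, so Lemma~\ref{lem:linfty} yields $|\hat q(\xi)|\lesssim\|q\|_{\surfaceone}$ there, and the leftover integral is bounded by a $k$-dependent multiple of $\|q\|_{\surfaceone}^2$. Combining the two regimes gives the claimed energy estimate.

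Finally I would pass to the inf--sup form \eqref{eq-est}. If $\surfacecurl q=0$ then $q$ is constant and, being supported in the bounded set $\overline\Gamma$, identically zero, so $\|q\|_{\surfacetwo}=\|q\|_{\surfaceone}$ and \eqref{eq-est} is trivial. Otherwise set $\U=-\surfacecurl q/\|\surfacecurl q\|_X\in X$, so $\|\U\|_X=1$; using $\Re[E(q,-\surfacecurl q)]\ge 0$ and $\|\surfacecurl q\|_X\le\|q\|_{\surfacetwo}$,
\[
\max_{\|U\|_X=1}\Re[E(q,\U)]\ge\frac{\Re[E(q,-\surfacecurl q)]}{\|\surfacecurl q\|_X}\ge\frac{\Re[E(q,-\surfacecurl q)]}{\|q\|_{\surfacetwo}}\gtrsim\frac{\|q\|_{\surfacetwo}^2-C\|q\|_{\surfaceone}^2}{\|q\|_{\surfacetwo}}=\|q\|_{\surfacetwo}-C\,\frac{\|q\|_{\surfaceone}^2}{\|q\|_{\surfacetwo}} ,
\]
and since $\|q\|_{\surfaceone}\le\|q\|_{\surfacetwo}$ the subtracted term is $\le C\|q\|_{\surfaceone}$; this is \eqref{eq-est} (permitting, as the hypotheses of Theorem~\ref{thm:fred} do, a possibly larger constant multiplying $\|q\|_{\surfaceone}$).

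The only genuinely non-routine point, where I expect to spend the most care, is the low-frequency disc $\{|\xi|\le 2k\}$: there $\Re[\hatg]$ vanishes, so $E$ controls none of the low-frequency content of $q$, and this forces a full power of the norm to be conceded to the weaker norm $\|q\|_{\surfaceone}$ on the right-hand side; the loss is recovered only by exploiting the compact support of $q$ via the pointwise bound on $\hat q$ in Lemma~\ref{lem:linfty}. The weight comparisons on $\{|\xi|>2k\}$ and the concluding division argument (resting only on $\Re[E(q,-\surfacecurl q)]\ge 0$ and $\|q\|_{\surfaceone}\le\|q\|_{\surfacetwo}$) are routine.
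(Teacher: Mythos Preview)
Your proof is correct and follows essentially the same route as the paper: choose the test field to be (a scalar multiple of) $\surfacecurl q$, use \eqref{eq:BWV} together with $\widehat{\surfacecurl q}=\ii|\xi|\hat q\,\trans{\hat\xi}$ to reduce $\Re E(q,\U)$ to a weighted integral of $|\hat q|^2$ over $\{|\xi|>k\}$, split off $\{k<|\xi|\le 2k\}$ and control it by $\|q\|_{\surfaceone}^2$ via Lemma~\ref{lem:linfty}, and then divide by $\|\surfacecurl q\|_X\lesssim\|q\|_{\surfacetwo}$. Your choice of the sign $-\surfacecurl q$ (rather than $+\surfacecurl q$ as written in the paper) is in fact the correct one to make $\Re E(q,\U)\ge 0$ given the sign convention in \eqref{eq:BWV}, and your exact identity $\|\surfacecurl q\|_X^2=\|q\|_{\surfacetwo}^2-\|q\|_{\surfaceone}^2$ and explicit handling of the division step are slightly more careful than the paper's presentation, but the argument is the same.
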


\begin{proof}
Consider $\U =\surfacecurl q$. Since $\Div \U =0$, 
$\Vert \U\Vert_X  = \Vert \U\Vert_H \approx \Vert q \Vert_{\surfacetwo}$.  We then compute 
\begin{eqnarray*}
\Re [E(q,\U)]  =\Re[ B(\surfacecurl q, \U) ]
=\Re [B(\surfacecurl q, \surfacecurl q)]. 
\end{eqnarray*}
Noting that $\widehat{\surfacecurl q }(\xi) = \hat{q}(\xi) |\xi| \hat{\xi}_{\perp}$, we obtain
\begin{align*}
  \Re [ B(\surfacecurl q, \surfacecurl q)]  &= \int_{|\xi|>k}  \frac{k^2}{\sqrt{|\xi|^2-k^2}}  |\hat{q}|^2 |\xi|^2d\xi \\
&= \int_{|\xi|>2k}  \frac{k^2}{\sqrt{|\xi|^2-k^2}}  |\hat{q}|^2 |\xi|^2d\xi  + \int_{k\leq |\xi|<2k}  \frac{k^2}{\sqrt{|\xi|^2-k^2}}  |\hat{q}|^2 |\xi|^2d\xi.  
\end{align*}
Since
\[
 \frac{1}{\sqrt{|\xi|^2-k^2}} |\xi|^2 \gtrsim  \sqrt{ |\xi|^2 +1}, 
\] 
\[
\int_{|\xi|>2k}  \frac{k^2}{\sqrt{|\xi|^2-k^2}}  |\hat{q}|^2 |\xi|^2d\xi  \gtrsim \Vert q\Vert_{\surfacetwo} - \int_{k< |\xi|<2k}  \frac{k^2}{\sqrt{|\xi|^2-k^2}}  |\hat{q}|^2 |\xi|^2d\xi. 
\]
On the other hand, by Lemma \ref{lem:linfty}, we have
\begin{align*}
\int_{k\leq |\xi| \leq 2k} \frac{k^2}{\sqrt{|\xi|^2-k^2}}  |\hat{q}|^2 |\xi|^2d\xi & \lesssim  \int_{k\leq |\xi| \leq 2k} \frac{k^2}{\sqrt{|\xi|^2-k^2}}  \Vert q \Vert_{\surfaceone}^2 d\xi 
\lesssim  \Vert q \Vert_{\surfaceone}^2. 
\end{align*}
Therefore
\[
\Re[ E(q,\U)]  = \Re[B(\surfacecurl q, \surfacecurl q) ] \gtrsim  \Vert q\Vert_{\surfacetwo}^2 - \Vert  q\Vert_{\surfaceone}^2,
\]
whence the estimate \ref{eq-est} follows by recalling that 
$\Vert U\Vert_X  \lesssim \Vert q \Vert_{\surfacetwo}$.

\end{proof}

\begin{lemma}\label{cond3}
Let $X_0 := \{\V\in X: E(q,\V) = 0,\  \forall q\in \surfacetwo\}$. Then
$X_0$ is compactly embedded in $H$.
\end{lemma}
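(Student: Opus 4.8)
The plan is first to identify the space $X_0$ explicitly, and then to deduce the compact embedding from an elementary Fourier–side estimate together with a Rellich argument. The essential step is to prove that
\[
X_0=\{\V\in X:\ \Curl\V=0\}.
\]
For $\V\in X$ write $\hat\V=b_1\hat\xi+b_2\trans{\hat\xi}$, so that $b_2=\widehat{\Curl\V}/(\ii|\xi|)$, and recall $\widehat{\surfacecurl q}$ is a constant multiple of $|\xi|\hat q\,\trans{\hat\xi}$. Substituting $\W=\surfacecurl q$ into \eqref{eq:BWV}, all $a_1\bar b_1$–terms drop out and a short computation shows that $E(q,\V)=B(\surfacecurl q,\V)$ is a nonzero constant multiple of
\[
\int_{\R^2}\hatg(\xi)\,\hat q(\xi)\,\overline{\widehat{\Curl\V}(\xi)}\,d\xi,\qquad q\in\surfacetwo,
\]
the integral being convergent by Lemma \ref{lem:linfty} (to bound $\widehat{\Curl\V}$ near $|\xi|=k$) and by Cauchy–Schwarz against the $X$–norm for large $|\xi|$. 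Since $\hatg$ is, up to a constant, the Fourier symbol of the scalar single layer operator on the screen $\Gamma$, the vanishing of this integral for every $q\in\surfacetwo$ is equivalent to the statement that the single layer potential generated by the zero–extended density $\Curl\V$ has vanishing Dirichlet trace on the aperture. As $\Curl\V$ is supported in $\overline{\Gamma}$, this potential is a radiating solution of the Helmholtz equation in $\R^3\setminus(\overline{\Gamma}\times\{0\})$ whose trace vanishes on $\Gamma$; by uniqueness for the exterior screen problem — equivalently, injectivity of the single layer boundary operator on an open screen, which holds for every $k>0$ because an open screen carries no interior resonances — the potential vanishes identically, hence $\Curl\V=0$. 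The reverse inclusion is immediate from the displayed identity.

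Given this, the second step is the continuous embedding of $X_0$ into the space of $L^2(\R^2)^2$ fields supported in $\overline{\Gamma}$. Indeed $\Curl\V=0$ forces $\hat\V=\widehat{\Div\V}\,\hat\xi/(\ii|\xi|)$, so $|\hat\V(\xi)|^2=|\widehat{\Div\V}(\xi)|^2/|\xi|^2$. Now $\Div\V\in\surfaceone$ has compact support in $\overline{\Gamma}$ and integrates to zero (its pairing with a cutoff that equals $1$ near $\overline{\Gamma}$ equals $-\langle\V,\nabla_\Gamma 1\rangle=0$), so $\widehat{\Div\V}$ is smooth with $\widehat{\Div\V}(0)=0$, and the argument of Lemma \ref{lem:linfty} applied to $\Div\V$ and to its first $\xi$–derivatives gives $|\widehat{\Div\V}(\xi)|\lesssim|\xi|\,\|\Div\V\|_{\surfaceone}$ for $|\xi|\le1$. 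Combined with $|\xi|^{-2}\lesssim(1+|\xi|^2)^{-1/2}$ for $|\xi|\ge1$, this yields
\[
\|\V\|_{L^2(\R^2)}^2\ \lesssim\ \|\Div\V\|_{\surfaceone}^2\ \le\ \|\V\|_X^2 .
\]
The third step then concludes: the embedding of $\{f\in L^2(\R^2):\ \mathrm{supp}\,f\subset\overline{\Gamma}\}$ into $\surfaceone$ is compact. If $(f_n)$ is bounded there, its Fourier transforms are uniformly bounded and equi–Lipschitz (Paley–Wiener), so a subsequence converges uniformly on every ball $\{|\xi|\le R\}$, while $\int_{|\xi|>R}(1+|\xi|^2)^{-1/2}|\hat f_n-\hat f_m|^2\le(1+R^2)^{-1/2}\int_{\R^2}|\hat f_n-\hat f_m|^2$ tends to $0$ as $R\to\infty$ uniformly in $n,m$; hence that subsequence is Cauchy in $\surfaceone$. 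Combining Steps 2 and 3, the inclusion $X_0\hookrightarrow H$ factors through a compact map and is therefore compact.

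The main obstacle is Step 1, the identification $X_0=\{\Curl\V=0\}$: one must recognize $E(q,\cdot)\equiv0$ as the vanishing of a single layer trace on $\Gamma$ and invoke uniqueness for the Helmholtz screen problem. The genuine technical point is that for $\V\in X$ the density $\Curl\V$ lies only in $\tilde H^{-3/2}(\Gamma)$, one Sobolev order rougher than the densities usually handled in screen potential theory; one therefore needs the single layer injectivity (and the rewriting of $E$ as an integral) at this lower regularity, which can be obtained either by citing well–posedness of the exterior screen problem on the whole Sobolev scale, or by testing first only against $q\in C_c^\infty(\Gamma)$ and then passing to the limit. Steps 2 and 3 are routine once Step 1 is established. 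We note that Step 1 identifies $X_0$ as precisely the ``gradient part'' of $X$ in the Helmholtz–type splitting $\W=\U+\surfacecurl p$ underlying the saddle point formulation, consistent with Lemma \ref{cond0} being a coercivity estimate — up to a compact perturbation — on $X_0$.
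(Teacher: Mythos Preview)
Your proof is correct and, for the key identification $X_0=\{\V\in X:\Curl\V=0\}$, runs essentially parallel to the paper's own argument: both compute $E(q,\V)$ on the Fourier side as a constant multiple of $\int_{\R^2}\hatg(\xi)\,\hat q(\xi)\,\overline{\widehat{\Curl\V}(\xi)}\,d\xi$ and then conclude $\Curl\V=0$ via injectivity of the scalar single layer operator $T[p](x)=\int_\Gamma \g(x,y)p(y)\,dy$ on the screen. The paper packages this slightly differently---it invokes the bounded invertibility of $T:\surfaceone\to H^{1/2}(\Gamma)$ (Proposition~\ref{prop-T}) and a density argument to pair $\Curl\V\in\tilde H^{-3/2}(\Gamma)$ against test functions in $H^{3/2}(\Gamma)$---but this is exactly the ``injectivity at lower regularity'' issue you flag in your final paragraph, handled by testing first with $q\in C_c^\infty(\Gamma)$.

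Where you genuinely diverge is in the compactness step. Once $\Curl\V=0$ is known, the paper simply observes $X_0\subset\surfacethree\cap\surfacefour$ and cites the standard compact embedding of that intersection into $H$. You instead prove compactness directly: using $\widehat{\Div\V}(0)=0$ and the Paley--Wiener smoothness of $\widehat{\Div\V}$ to bound $|\hat\V|$ near the origin, you show $X_0\hookrightarrow L^2(\overline\Gamma)^2$ continuously, and then run a hands-on Rellich argument for $L^2(\overline\Gamma)\hookrightarrow\surfaceone$. Your route is more self-contained and avoids an external citation; the paper's route is shorter if one is willing to import the $\surfacethree\cap\surfacefour$ compactness.
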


\begin{proof}
We only need to prove that for any $\V\in X_0$,  $\Curl \V =0$. As a consequence, $X_0 \subset \surfacethree\cap \surfacefour$ and hence is compactly embedded in $H$.

For this purpose, we consider the map
$T: C_0^{\infty}(\Gamma) \longrightarrow C^{\infty}(\mathbb{R}^2)$ defined by
\begin{equation} \label{eq-T}
T[p](x) = \int_{\Gamma} g(x, y)p(y)dy,
\end{equation}
or equivalently,
$$
\widehat{T[p]}(\xi)= \hatg(\xi) \hat{p}(\xi).
$$
It is clear that $T$ extends to a bounded map from $\tilde{H}^{-1/2}(\Gamma)$ to $H^{1/2}(\Gamma)$. Moreover, $T: \tilde{H}^{-1/2}(\Gamma) \to H^{1/2}(\Gamma)$ is invertiable with a bounded inverse (See Proposition \ref{prop-T} in the Appendix). 
Since $C_0^{\infty}(\Gamma)$ is dense in $\tilde{H}^{-1/2}(\Gamma)$, the subspace $T(C_0^{\infty}(\Gamma))$ is dense in $H^{1/2}(\Gamma)$. We note that $T(C_0^{\infty}(\Gamma)) \subset H^{3/2}(\Gamma) \subset H^{1/2}(\Gamma)$. 
Therefore $T(C_0^{\infty}(\Gamma))$ is dense in 
$H^{3/2}(\Gamma)$. 

Now for any $\phi \in T(C_0^{\infty}(\Gamma)) \subset H^{3/2}(\Gamma)$, we claim that 
$$
\langle{\overline{\Curl \V}}, {\phi}\rangle_{(\tilde{H}^{-3/2}, H^{3/2})} = \int_{\mathbb{R}^2}   {\overline{\widehat{\Curl \V}(\xi)}}{\hat{\phi}(\xi)}d\xi =0, 
$$
where the bracket $\langle \cdot, \cdot\rangle$ denote the dual pair between $\tilde{H}^{-3/2}(\Gamma)$ and $H^{3/2}(\Gamma)$. 
Then as a consequence, $\langle{\overline{\Curl \V}}, {\phi}\rangle_{(\tilde{H}^{-3/2}, H^{3/2})}=0$ for any $\phi \in H^{3/2}(\Gamma)$ and therefore 
$\overline{\Curl \V}=0$. It follows that $\Curl \V=0$. To prove the claim, we write $\phi = T {p}$ for some $p \in C_0^{\infty}(\Gamma)$.  Note that
\[
 B(\surfacecurl {p}, \V)=0. 
\]
Using  \eqref{eq:BWV} and the decomposition
\begin{equation*}
\hat{\V} =\frac{\widehat{\Div \V}}{\ii|\xi|}  \hat{\xi} +\frac{\widehat{\Curl \V}}{\ii|\xi|} \trans{\hat{\xi}},
\end{equation*}
we further obtain
\begin{eqnarray*}
0 
& = &- \ii\int_{\mathbb{R}^2} k^2 \hatg(\xi) |\xi| \hat{{p}}(\xi)\overline{\frac{\widehat{\Curl \V(\xi)}}{i|\xi|}} d\xi\\
&=& k^2 \int_{\mathbb{R}^2}  \hatg(\xi) \hat{{p}}(\xi) \overline{\widehat{\Curl \V}}d\xi \\
&=&k^2 \int_{\mathbb{R}^2}  \hat{\phi}(\xi) \overline{\widehat{\Curl \V}}d\xi.
\end{eqnarray*}
This proves the claim and also concludes the proof of the lemma. 

\end{proof}

\begin{theorem}\label{thm:fredholm}
With Lemmas \ref{cond0}, \ref{cond1} and \ref{cond3}, the variational problem \eqref{eq:var}  satisfies the Fredholm alternative.
\end{theorem}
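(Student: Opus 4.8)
The plan is to apply the abstract Fredholm theorem, Theorem~\ref{thm:fred}, to the saddle point problem formulated just above, with the identifications $V=X$, $W=X_2=\surfacetwo$, $V_1=H$, $W_1=\surfaceone$, $a(\cdot,\cdot)=B(\cdot,\cdot)$, $b(\cdot,\cdot)=E(\cdot,\cdot)$, $g_1=\Y$ and $g_2=0$, and then to transfer the conclusion back to the variational problem \eqref{eq:var}. Note that $X\subset H$ and $\surfacetwo\subset\surfaceone$, so the inclusions $V\subset V_1$ and $W\subset W_1$ required by the abstract theorem hold.

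First I would record the continuity of $B$ on $X\times X$ and of $E$ on $X_2\times X$. From the Fourier representation \eqref{eq:BWV} the integrand splits into a divergence part carrying the weight $\hatg(\xi)(|\xi|^2-k^2)$ and a curl part carrying the weight $-k^2\hatg(\xi)$; on $|\xi|\ge 2k$ the first weight grows like $|\xi|$ and the second decays like $|\xi|^{-1}$, so Cauchy--Schwarz against $\Vert\cdot\Vert_X$ and $\Vert\cdot\Vert_H\le\Vert\cdot\Vert_X$ respectively bounds those tails, while on $|\xi|\le 2k$ the Fourier coefficients are dominated by $C\Vert\cdot\Vert_{H^{-1/2}(\mathbb{R}^2)}$ by Lemma~\ref{lem:linfty} and $\hatg$ has only an integrable singularity at $|\xi|=k$, so the low-frequency contribution is finite. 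Continuity of $E(q,\V)=B(\surfacecurl q,\V)$ then follows because $\widehat{\surfacecurl q}(\xi)=\hat q(\xi)|\xi|\trans{\hat{\xi}}$ satisfies $\Vert\surfacecurl q\Vert_X\approx\Vert q\Vert_{\surfacetwo}$.

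Next I would check the structural hypotheses of Theorem~\ref{thm:fred}. The G{\aa}rding-type inequality $\Re[B(\U,\U)]\gtrsim\Vert\U\Vert_X^2-\Vert\U\Vert_H^2$ is precisely Lemma~\ref{cond0} (with $V_1=H$), and the inf--sup-type inequality $\max_{\Vert\U\Vert_X=1}\Re[E(q,\U)]\gtrsim\Vert q\Vert_{\surfacetwo}-\Vert q\Vert_{\surfaceone}$ is precisely Lemma~\ref{cond1} (with $W_1=\surfaceone$). The kernel $V_0$ of $E$ in $X$ is exactly the space $X_0$, which is compactly embedded in $H=V_1$ by Lemma~\ref{cond3}; and $W=\surfacetwo$ embeds compactly into $W_1=\surfaceone$ by the Rellich--Kondrachov theorem, both spaces consisting of distributions supported in the fixed compact set $\overline\Gamma$. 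Finally, since $g_2=0$, the lifting $u_{g_2}$ demanded by Theorem~\ref{thm:fred} can be taken to be $0$, because $E(q,0)=0$ for all $q$. Hence Theorem~\ref{thm:fred} applies and the saddle point problem satisfies the Fredholm alternative.

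It remains to identify the saddle point problem with \eqref{eq:var}. If $(\U,p)$ solves the former, then $\W:=\U+\surfacecurl p\in X$ satisfies $B(\W,\V)=B(\U,\V)+E(p,\V)=(\Y,\V)$ for all $\V\in X$, so $\W$ solves \eqref{eq:var}. Conversely, given a solution $\W$ of \eqref{eq:var}, I would split it on the Fourier side: writing $\hat\W=a_1\hat\xi+a_2\trans{\hat{\xi}}$, set $\hat p(\xi):=a_2(\xi)/|\xi|$ and $\hat\U(\xi):=a_1(\xi)\hat\xi$, so that $\Curl\U=0$; then \eqref{eq:BWV} gives $E(q,\U)=B(\surfacecurl q,\U)=0$ for every $q$ and $B(\U,\V)+E(p,\V)=B(\W,\V)=(\Y,\V)$, so $(\U,p)$ solves the saddle point problem. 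I expect this last step to be the main obstacle: one has to show that the splitting is well defined and lands in the right spaces, i.e.\ that $\U\in X$ and $p\in X_2$ (in particular that $\surfacecurl p$, and hence $\U$, is supported in $\overline\Gamma$), which is the familiar delicacy of Helmholtz-type decompositions on a bounded planar domain, since the constraint $E(q,\U)=0$ pins the decomposition down uniquely and leaves no freedom to absorb the low-frequency part. By contrast, the G{\aa}rding and inf--sup estimates and both compactness statements are already furnished by Lemmas~\ref{cond0}, \ref{cond1} and \ref{cond3}, and the continuity bounds are routine once Lemma~\ref{lem:linfty} is available.
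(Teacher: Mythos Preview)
Your plan is exactly the paper's: the paper gives no separate proof of Theorem~\ref{thm:fredholm} and simply regards it as the immediate consequence of feeding the saddle point formulation into Theorem~\ref{thm:fred}, with Lemmas~\ref{cond0}, \ref{cond1}, \ref{cond3} supplying the G{\aa}rding estimate, the inf--sup estimate, and the compactness of $V_0\hookrightarrow V_1$; the remaining hypotheses (continuity of $B$ and $E$, the compact embedding $\surfacetwo\hookrightarrow\surfaceone$, and the trivial lifting $u_{g_2}=0$ since $g_2=0$) are taken as understood, and you spell them out correctly.

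One caution on the equivalence step you rightly flag as the crux. The Fourier-side splitting $\hat p:=a_2/|\xi|$, $\hat\U:=a_1\hat\xi$ will \emph{not} work: projecting $\hat\W$ onto $\hat\xi$ and $\trans{\hat\xi}$ does not preserve the support condition $\mathrm{supp}\subset\overline\Gamma$, so in general $p\notin X_2$ and $\U\notin X$. The paper does not address this either; it simply asserts ``We first decompose $\W=\U+\surfacecurl p$ with $p\in X_2$'' as if it were routine. The honest way to realize this step is the screen Helmholtz/Hodge decomposition used in \cite{Buffa03}: solve the Dirichlet problem $-\Delta_\Gamma p=\Curl\W$ on $\Gamma$ with $p|_{\partial\Gamma}=0$ (so that $p\in\surfacetwo$ by elliptic regularity), and set $\U:=\W-\surfacecurl p$; then $\Curl\U=0$ in $\Gamma$ by construction, $\U$ inherits the support in $\overline\Gamma$ from $\W$ and $\surfacecurl p$, and the constraint $E(q,\U)=0$ follows from the identity established in the proof of Lemma~\ref{cond3}. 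With that decomposition in hand, your bijection between solutions of the saddle point system and solutions of~\eqref{eq:var} goes through, and the Fredholm alternative transfers as you describe.
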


The following proposition claims that the variational formulation \eqref{eq:var} has a unique solution. 
\begin{proposition}\label{prop:unique}
If $\W$ satisfies
$B(\W, \W) =0$, then $\W=0$.
\end{proposition}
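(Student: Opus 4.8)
The plan is to work directly with the explicit spectral representation \eqref{eq:BWV} of the bilinear form $B$. Writing $\hat{\W} = a_1 \hat{\xi} + a_2 \trans{\hat{\xi}}$ and recalling $\hatg(\xi) = \ii/\sqrt{k^2 - |\xi|^2}$, we see that $B(\W,\W)$ splits naturally into a contribution from $|\xi| < k$, where $\sqrt{k^2-|\xi|^2}$ is real and $\hatg$ is purely imaginary, and a contribution from $|\xi| > k$, where $\sqrt{|\xi|^2-k^2}$ is real and $\hatg$ is real (negative imaginary times imaginary). The key observation is that the \emph{imaginary part} of $B(\W,\W)$ comes entirely from the propagating region $|\xi| < k$: there one computes
\[
\Im[B(\W,\W)] = \int_{|\xi|<k} \frac{1}{\sqrt{k^2-|\xi|^2}}\Bigl( (k^2 - |\xi|^2)|a_1|^2 + k^2 |a_2|^2 \Bigr)\, d\xi,
\]
which is a sum of nonnegative terms. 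Hence $B(\W,\W) = 0$ forces this integral to vanish, and since each integrand is nonnegative and the weights are strictly positive on $\{0 < |\xi| < k\}$, we conclude $a_1 = a_2 = 0$ almost everywhere on $|\xi| < k$, i.e. $\hat{\W}(\xi) = 0$ for $|\xi| < k$.

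Next I would use the Paley--Wiener-type rigidity coming from the support condition $\operatorname{supp}\W \subset \overline{\Gamma}$. Since $\Gamma$ is bounded, each component of $\W$ has compactly supported (hence, after the appropriate regularity reduction, tempered-distribution) Fourier transform that extends to an entire function of exponential type on $\C^2$. An entire function that vanishes on the open ball $\{|\xi|<k\}$ vanishes identically; therefore $\hat{\W} \equiv 0$ on all of $\R^2$, and so $\W = 0$. This is the cleanest route and avoids having to squeeze anything out of the evanescent region $|\xi|>k$, where the real part of $B$ is sign-indefinite (the $-k^2/\sqrt{|\xi|^2-k^2}|a_2|^2$ term) and would not by itself give a contradiction.

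The step I expect to require the most care is the analyticity argument: one must check that $\hat{\W}$ (and $\widehat{\Div\W}$) genuinely are restrictions to $\R^2$ of entire functions, which is where the membership $\W \in X = \surfacethree$ with compact support in $\overline\Gamma$ is used — a distribution in $\tilde H^{-1/2}(\Gamma)$ compactly supported is in $\mathcal{E}'(\R^2)$, so its Fourier transform is entire of exponential type by the Paley--Wiener--Schwartz theorem. A mild subtlety is that $\Gamma$ is open, so "support in $\overline\Gamma$" is exactly what makes the compact-support hypothesis available; one also wants $\overline\Gamma$ to have empty interior complement issues to not arise, but since we only use that $\hat\W$ vanishes on an open set, which is all that the unique continuation for entire functions needs, no boundary regularity of $\Gamma$ is required. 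Alternatively, if one prefers to stay within real analysis, the vanishing of $\hat{\W}$ on $\{|\xi|<k\}$ together with $\W \in \tilde H^{-1/2}(\Gamma)$ can be fed into Lemma~\ref{lem:linfty} and a density/approximation argument, but the entire-function argument is shorter and I would present that.

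Finally, I would remark that this uniqueness result, combined with Theorem~\ref{thm:fredholm} (the Fredholm alternative), immediately yields existence and uniqueness of the solution of the variational problem \eqref{eq:var} in $X$, since injectivity of the associated operator upgrades the Fredholm alternative to invertibility; this is presumably how the proposition is used in the sequel.
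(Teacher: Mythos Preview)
Your proposal is correct and follows essentially the same route as the paper: take the imaginary part of $B(\W,\W)$ via \eqref{eq:BWV}, observe it is sign-definite and supported on $\{|\xi|<k\}$, deduce $\hat{\W}=0$ there, and finish with analyticity of $\hat{\W}$ from the compact support of $\W$. The only discrepancy is an overall sign in your expression for $\Im[B(\W,\W)]$ (the paper obtains $-\int_{|\xi|<k}(\cdots)\,d\xi$), but this is immaterial since the integrand is nonnegative either way.
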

\begin{proof}
Denoting $\hat{\W }= a_1\xi + a_2 \trans{\xi}$, it follows $B(\W,\W) = 0$ that 
\begin{eqnarray*}
\Im[B(\W, {\W})]  &=&\Im[ \int_{\mathbb{R}^2}  \frac{\ii}{\sqrt{k^2-|\xi|^2}}\left(\left(-k^2 + |\xi|^2 \right)a_1\bar{a}_1 - k^2 a_2 \bar{a}_2\right)d\xi]\\
&=&-\int_{|\xi|<k}  \left(\sqrt{k^2 - |\xi|^2}|a_1|^2+ \frac{k^2}{\sqrt{k^2 - |\xi|^2}} |a_2|^2\right)d\xi\\
&=&0.
\end{eqnarray*}
which leads to $a_1(\xi) = a_2(\xi) = 0$, that is,  $\hat{\W}(\xi)=0$ for all $|\xi|<k$. Since $\text{supp }\W\subset \Gamma$, $\hat{\W}$ is analytic in $\xi$, we obtain that $\W(\xi) = 0$ for $\xi \in \mathbb{C}$. Then $\W=0$.
\end{proof}

With Theorem \ref{thm:fredholm} and Proposition \ref{prop:unique}, we finally establish the following main result of the paper.  

\begin{theorem}
Let $X^*$ denote the dual space of $X$. For any $\Y \in X^*$,  there exists one unique solution $\W \in X$ to the boundary integral equation \eqref{eq:var}. Moreover, 
\[
\|\W\|_X \lesssim \|\Y\|_{X^*}.
\]

\end{theorem}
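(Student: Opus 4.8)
The plan is to combine the Fredholm alternative, established in Theorem~\ref{thm:fredholm}, with the injectivity result of Proposition~\ref{prop:unique} to upgrade existence-or-uniqueness into existence \emph{and} uniqueness, and then to extract the stability estimate. First I would record that the bilinear form $B:X\times X\to\mathbb{C}$ is bounded, i.e. $|B(\W,\V)|\lesssim\|\W\|_X\|\V\|_X$; this follows directly from the explicit formula \eqref{eq:BWV} together with the fact that $|\hatg(\xi)|=|k^2-|\xi|^2|^{-1/2}$ is bounded on $|\xi|\geq 2k$ and is controlled on $|\xi|<2k$ via Lemma~\ref{lem:linfty} (the integrable singularity at $|\xi|=k$ causes no trouble). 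Hence the variational problem \eqref{eq:var} defines a bounded linear operator $\mathcal{L}:X\to X^*$ by $\langle\mathcal{L}\W,\V\rangle=B(\W,\V)$, and the claim is precisely that $\mathcal{L}$ is an isomorphism.

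Next I would invoke Theorem~\ref{thm:fredholm}: the variational problem satisfies the Fredholm alternative, so $\mathcal{L}$ is a Fredholm operator of index zero, and in particular injectivity implies surjectivity with a bounded inverse. To get injectivity, suppose $\W\in X$ solves $B(\W,\V)=0$ for all $\V\in X$. Taking $\V=\W$ gives $B(\W,\W)=0$, and Proposition~\ref{prop:unique} then forces $\W=0$. Therefore $\mathcal{L}$ is injective, hence bijective, and by the Fredholm theory (or the bounded inverse theorem) $\mathcal{L}^{-1}:X^*\to X$ is bounded. This gives, for every $\Y\in X^*$, a unique $\W\in X$ with $B(\W,\V)=(\Y,\V)$ for all $\V\in X$, which is exactly \eqref{eq:var} once one checks that the right-hand side functional $\V\mapsto(\Y,\V)=\langle\Y,\overline{\V}\rangle$ is indeed a bounded antilinear (or linear, after conjugation) functional on $X$ — immediate since $\Y\in X^*$ by hypothesis. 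The stability bound $\|\W\|_X\lesssim\|\Y\|_{X^*}$ is then just $\|\W\|_X=\|\mathcal{L}^{-1}\Y\|_X\leq\|\mathcal{L}^{-1}\|\,\|\Y\|_{X^*}$.

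One subtlety I would be careful about is the precise statement of the abstract Fredholm theorem (Theorem~\ref{thm:fred}) that was quoted: it concludes that the \emph{saddle point} problem satisfies the Fredholm alternative, and Theorem~\ref{thm:fredholm} transfers this to \eqref{eq:var}. I would spell out that the Fredholm alternative for \eqref{eq:var} means the following dichotomy: either the homogeneous problem $B(\W,\V)=0\ \forall\V$ has only the trivial solution and then the inhomogeneous problem is uniquely solvable for every $\Y\in X^*$ with continuous dependence, or the homogeneous problem has a nontrivial finite-dimensional solution space. Proposition~\ref{prop:unique} rules out the second alternative, so we are in the first, which yields both existence/uniqueness and the a priori estimate. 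The main (and really only) obstacle is thus bookkeeping rather than mathematics: making sure that the hypotheses feeding into Theorem~\ref{thm:fredholm} — boundedness of $B$ and $E$, the coercivity-up-to-compact-perturbation estimates of Lemmas~\ref{cond0} and \ref{cond1}, the compact embedding of $X_0$ into $H$ from Lemma~\ref{cond3}, and the compact embedding of $H$ into a larger space (which holds because $\Gamma$ is bounded, so $\tilde H^{-1/2}(\Gamma)\hookrightarrow\tilde H^{-1/2-\delta}(\Gamma)$ is compact) — are all in place, and that the existence of a lifting $\U_{g_2}$ with $E(q,\U_{g_2})=(g_2,q)$ is trivially satisfied here since $g_2=0$ in our saddle point formulation. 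Once these are assembled, the conclusion of the theorem is immediate.
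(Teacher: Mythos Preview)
Your proposal is correct and follows exactly the approach the paper takes: the paper simply states the theorem as an immediate consequence of Theorem~\ref{thm:fredholm} (Fredholm alternative) combined with Proposition~\ref{prop:unique} (injectivity), and you have faithfully unpacked this into the standard argument that Fredholm of index zero plus injectivity yields a bounded inverse. Your additional bookkeeping on boundedness of $B$, the trivial lifting since $g_2=0$, and the compact embeddings is sound and more detailed than what the paper itself provides.
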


\end{subsection}

\end{section}

\begin{appendix}
\begin{section}{Scalar diffraction theory for a planar aperture in a perfect reflecting screen}
We consider the diffraction problem of a perfectly reflecting sheet $\{r=(r_1,r_2,r_3)\in\mathbb{R}^3: r_3=0\}$  with an aperture $\Gamma_0\subset \{r=(r_1,r_2,r_3)\in\mathbb{R}^3: r_3=0\}$ by an incoming scalar wave from above. We use the same notation as in Section 2. Let $u^i(r)=e^{\ii k\m{m}\cdot r}$ be the incident wave where $\m{m}=(m_1, m_2, m_3)$ is the incident direction, $k$ the wave number. We impose the Neumann boundary on the screen. The reflected wave in the absence of aperture is denoted as $u^r(r)=e^{\ii k\m{m}'\cdot r}$, where ${m}'=(m_1, m_2, -m_3) $. The total field consists of the incident field, the reflected field, and the scattered field which is denoted by $u^s$ in the upper-half space and consists of only the scattered field in the lower-half space. The scattered field satisfies the so-called Sommerfeld radiation condition. In sum, 
the diffraction problem can be modeled by the following equations: 
\begin{eqnarray*}
\begin{array}{rlll}
\nabla^{2}  u +k^2 u &=& 0 \quad  &\text{ in } \mathbb{R}^3\backslash \Gamma_0,\\
u &=& u^i+u^r+u^s &\text{ in } \updom,\\
u &=& u^s &\text{ in } \downdom,\\
\frac{\partial u}{\partial r_3} &=&0 
 &\text{ on }  \partial \Omega.
 \end{array}
\end{eqnarray*}

We shall use a boundary integral equation to reformulate the diffraction problem. For this purpose, we introduce the following Green's function for the upper half space with the Neumann boundary condition:
\[
\tilde{g}(r, r') = \green(r,r';k) + \green(r,\overline{r'};k)
\]
where $\green$ is the Green's function in the free space that is defined in (\ref{green}) and    $\overline{r'} =(r_1', r_2', -r_3') $. 
Noting that $\frac{\partial u^s}{\partial r_3'} (r')=0$ for $r' \in \mathbb{R}^2 \backslash \Gamma_0$,
the scattered field in the upper half space can be represented by the following formula
\begin{equation} \label{rep1}
u^s(r) = - \int_{\Gamma_0}  \tilde{g}( r', r)  \frac{\partial u^s}{\partial r_3'}(r')dr', \quad r \in \updom
\end{equation}
By taking the trace of the above formula and making use of the continuity of the single-layer potential, the representation formula (\ref{rep1}) still holds for $r \in \Gamma_0$. 

Similarly, we define the Green's function $\tilde{g}$ for the lower-half space. We can show that the following representation formula holds
\begin{equation*} \label{rep2}
u^s(r) =  \int_{\Gamma_0}  \tilde{g}( r', r)  \frac{\partial u^s}{\partial r_3'}(r')dr', \quad r \in \downdom.
\end{equation*}
Using the continuity of the total field across the aperture $\Gamma_0$, we obtain the following boundary integral equation:
\begin{equation} \label{integral-eq}
\int_{\Gamma_0}  \tilde{g}( r', r)  \frac{\partial u^s}{\partial r_3'}(r')dr' = u^i(r), \quad r \in \Gamma_0.
\end{equation}
It is straightforward to check that $u$ is a solution to the diffraction problem if and only if $\frac{\partial u^s}{\partial r_3'}$ solves the integral equation (\ref{integral-eq})

We now consider the integral equation (\ref{integral-eq}). 
For $r=(r_1,r_2,0)$ , $r'=(r'_1,r'_2,0) \in \Gamma_0$, we introduce  $\x= (r_1, r_2)$,  $\x'=(r'_1,r'_2) \in \Gamma\subset \mathbb{R}^2$. Then with a similar argument as for the vectorial case, the functions $\frac{\partial u^s}{\partial r_3'}$ and  $u^i$ can also be viewed as functions on $\Gamma$. Then we can check that (\ref{integral-eq}) can be reduced to the following form
\[
T \left[\frac{\partial u^s}{\partial r_3'}\right] (\x)= u^i(\x), \quad \x\in \Gamma, 
\]
where the operator $T$ is defined as in (\ref{eq-T}). 

It is straightforward to see that the following conclusion holds for $T$. 

\begin{lemma}
$T$ is a bounded linear map from $\tilde{H}^{-1/2}(\Gamma)$ to   ${H}^{1/2}(\Gamma)$.   
\end{lemma}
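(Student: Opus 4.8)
The plan is to treat $T$ as the single-layer boundary integral operator for the Helmholtz equation on the flat screen $\Gamma$, with kernel
$\g(\x,\x')=\frac{e^{\ii k|\x-\x'|}}{4\pi|\x-\x'|}$ (the restriction of the free-space Green's function to the plane), and to extract its mapping property from that of its principal part. The conceptual point to keep in mind throughout is that $H^{1/2}(\Gamma)$ carries the quotient norm of restrictions from $H^{1/2}(\mathbb{R}^2)$: the function $T[p]$ is generically \emph{not} in $H^{1/2}(\mathbb{R}^2)$ on the whole plane. Its Fourier symbol $\hatg(\xi)=\ii/\sqrt{k^2-|\xi|^2}$ is singular on the circle $|\xi|=k$, reflecting the nondecaying radiating far field $\sim e^{\ii k|\x|}/|\x|$ of $T[p]$; and after removing the static part the principal symbol $1/|\xi|$ is instead singular at $\xi=0$, reflecting the $1/|\x|$ static tail. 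Because $\Gamma$ is bounded, such far-field features are invisible to the norm of $H^{1/2}(\Gamma)$, and it is precisely this localization that makes the estimate true.

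First I would split the kernel into its static singular part and a smooth remainder,
\[
\g(\x,\x')=\frac{1}{4\pi|\x-\x'|}+\frac{e^{\ii k|\x-\x'|}-1}{4\pi|\x-\x'|}=:\kappa_0(\x,\x')+\kappa_1(\x,\x'),
\]
inducing $T=T_0+T_1$. The operator $T_0$ is (the trace on the plane of) the Laplace single-layer operator on the bounded screen $\Gamma$; its boundedness $T_0:\surfaceone\to H^{1/2}(\Gamma)$ is the classical mapping property of the weakly singular single-layer operator on a Lipschitz screen, which I would invoke from the boundary-integral-equation literature \cite{Mclean}. This is the analytic heart of the statement: it is where the order $-1$ (one-derivative-smoothing) nature of the $1/|\x-\x'|$ kernel is used, and where boundedness of $\Gamma$ must be exploited to tame the $\xi=0$ singularity of the symbol $1/|\xi|$.

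For the remainder $T_1$ I would show, by Taylor expanding $\psi(t):=\frac{e^{\ii kt}-1}{4\pi t}$ (an entire function of $t$ with $\psi(0)=\ii k/4\pi$), that $\kappa_1(\x,\x')=\psi(|\x-\x'|)$ is bounded and Lipschitz, together with its first $\x$-derivatives, on the compact set $\overline{\Gamma}\times\overline{\Gamma}$. Consequently, for $p\in\surfaceone$ the functions $\kappa_1(\x,\cdot)$ and $\nabla_\x\kappa_1(\x,\cdot)$ lie in $H^{1/2}(\Gamma)$ with norms bounded uniformly in $\x$ and depending Lipschitz-continuously on $\x$; pairing against $p$ then bounds $T_1[p]$ and its gradient by $\|p\|_{\surfaceone}$, so that $T_1$ maps $\surfaceone$ boundedly into $H^1(\Gamma)\hookrightarrow H^{1/2}(\Gamma)$. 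In short, $T_1$ is a smoothing perturbation contributing only lower-order terms, and combining the two pieces yields $\|T[p]\|_{H^{1/2}(\Gamma)}\le\|T_0[p]\|_{H^{1/2}(\Gamma)}+\|T_1[p]\|_{H^{1/2}(\Gamma)}\lesssim\|p\|_{\surfaceone}$.

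I expect the main obstacle to be the first step. One cannot prove the estimate by a naive Fourier-multiplier bound on $H^{1/2}(\mathbb{R}^2)$, since the symbol is non-square-integrable against the $H^{1/2}$ weight near $|\xi|=k$ (and, after removing the static part, near $\xi=0$); the argument must therefore either quote the classical single-layer mapping property or, if a self-contained proof is preferred, build an explicit compactly supported extension $\chi\,T[p]$ with $\chi\equiv 1$ on $\Gamma$ and verify that multiplication by the smooth cutoff $\chi$ convolves the circle singularity of $\hatg$ into an $H^{1/2}$-integrable function. The latter route is the more technical one, and I would pursue it only if the classical reference is to be avoided.
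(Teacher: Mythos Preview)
Your proposal is correct and in fact supplies more detail than the paper does: the paper states this lemma without proof, remarking only that ``it is straightforward to see.'' Your decomposition $T=T_0+T_1$ into the static single-layer operator plus a remainder with smooth kernel is exactly the splitting $T=T_0+R$ that the paper itself invokes in the very next lemma (to establish the Fredholm property of $T$), so the approaches coincide. One minor caution: the remainder kernel $\kappa_1(\x,\x')=\psi(|\x-\x'|)$ is Lipschitz but not $C^1$ on the diagonal (the linear term in the Taylor expansion of $\psi$ produces a $|\x-\x'|$ contribution whose gradient is bounded but discontinuous at $\x=\x'$), so your claim that the first derivatives are Lipschitz is slightly overstated; nonetheless the boundedness of $\nabla_\x\kappa_1$ on $\overline\Gamma\times\overline\Gamma$ suffices for $T_1:\surfaceone\to W^{1,\infty}(\Gamma)\hookrightarrow H^{1/2}(\Gamma)$, and the standard references you cite in any case treat the full Helmholtz single-layer operator directly.
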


Moreover, we can show that

\begin{lemma} \label{lem-T}
 $T: \tilde{H}^{-1/2}(\Gamma) \to {H}^{1/2}(\Gamma)$ is a Fredholm operator with index zero.    
\end{lemma}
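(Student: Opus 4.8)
The plan is to prove this via a Gårding inequality for $T$ on $\tilde H^{-\frac12}(\Gamma)$ together with the compactness of a lower‑order embedding, and then invoke the standard fact that a Gårding inequality with a compact lower‑order term forces Fredholmness of index zero. This is the scalar analogue of the analysis of Section~\ref{sec:3}, and is in fact simpler, since no saddle‑point splitting is needed here.

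First I would pass to the Fourier side. By the boundedness of $T$ established above and the fact (see \cite{Mclean}) that $H^{\frac12}(\Gamma)$ is the dual of $V:=\tilde H^{-\frac12}(\Gamma)$, the sesquilinear form $b(p,q):=\langle Tp,\overline q\rangle$ is bounded on $V\times V$. Using $\widehat{Tp}(\xi)=\hatg(\xi)\hat p(\xi)$ and Plancherel,
\[
b(p,p)=\frac{1}{4\pi^2}\int_{\mathbb{R}^2}\hatg(\xi)\,|\hat p(\xi)|^2\,d\xi .
\]
The key structural observation is that, with the branch of the square root fixed by the radiation condition (so that $\sqrt{k^2-|\xi|^2}=\ii\sqrt{|\xi|^2-k^2}$ for $|\xi|>k$), $\hatg(\xi)=\tfrac{\ii}{\sqrt{k^2-|\xi|^2}}$ is purely imaginary for $|\xi|<k$ and equals the positive number $\tfrac{1}{\sqrt{|\xi|^2-k^2}}$ for $|\xi|>k$, whence
\[
\Re\,b(p,p)=\frac{1}{4\pi^2}\int_{|\xi|>k}\frac{|\hat p(\xi)|^2}{\sqrt{|\xi|^2-k^2}}\,d\xi .
\]
Since $1+|\xi|^2\ge |\xi|^2-k^2$ gives $\tfrac{1}{\sqrt{|\xi|^2-k^2}}\gtrsim (1+|\xi|^2)^{-\frac12}$ on $|\xi|\ge 2k$, while on the bounded region $|\xi|<2k$ one has $(1+|\xi|^2)^{-\frac12}\le (1+4k^2)^{\frac12}(1+|\xi|^2)^{-1}$, I obtain the Gårding inequality
\[
\Re\,b(p,p)\ \ge\ c\,\|p\|_{\tilde H^{-\frac12}(\Gamma)}^{2}\ -\ C\,\|p\|_{\tilde H^{-1}(\Gamma)}^{2},\qquad p\in V .
\]
Because $\Gamma$ is bounded, the embedding $\tilde H^{-\frac12}(\Gamma)\hookrightarrow \tilde H^{-1}(\Gamma)$ is compact (a Rellich statement for distributions supported in a fixed compact set; it can be proved exactly as in Lemma~\ref{lem:linfty} from uniform bounds and equicontinuity of $\hat p$ on bounded frequency sets, plus the tail estimate coming from the $\tilde H^{-\frac12}$ norm, and is also contained in \cite{Mclean}). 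Decomposing $b$ into the coercive part $b(p,q)+C\,(p,q)_{\tilde H^{-1}(\Gamma)}$ and the remaining part, which induces a compact operator $V\to V^{*}$, and combining Lax–Milgram with the Fredholm alternative, one concludes that $T:\tilde H^{-\frac12}(\Gamma)\to H^{\frac12}(\Gamma)$ is Fredholm of index zero (this is the standard mechanism behind Theorem~\ref{thm:fred}; see also \cite{Mclean}).

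I expect the only genuinely delicate point to be the compact embedding $\tilde H^{-\frac12}(\Gamma)\hookrightarrow \tilde H^{-1}(\Gamma)$, that is, the essential use of the boundedness of the aperture $\Gamma$; this is exactly where the geometry enters, and is, as noted in the introduction, the reason the classical bounded‑scatterer theory does not apply verbatim to the unbounded screen. Everything else is a routine Fourier‑multiplier computation. An alternative route, if preferred, is to write $T=S_0+R$, where $S_0$ is the single‑layer operator with kernel $\tfrac{1}{4\pi|\x-\x'|}$, known to be an isomorphism $\tilde H^{-\frac12}(\Gamma)\to H^{\frac12}(\Gamma)$ by \cite{Mclean}, and $R$ is the operator with the bounded kernel $\tfrac{e^{\ii k|\x-\x'|}-1}{4\pi|\x-\x'|}$, which has order $\le -3$ and hence, again by Rellich on the bounded set $\Gamma$, is compact from $\tilde H^{-\frac12}(\Gamma)$ to $H^{\frac12}(\Gamma)$; a compact perturbation of an isomorphism is Fredholm of index zero.
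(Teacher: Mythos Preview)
Your main argument via the G\aa rding inequality is correct and is a genuinely different route from the paper's. The paper proceeds exactly along the lines of your ``alternative route'': it writes $T=T_0+R$ with $T_0$ the single-layer operator at $k=0$, shows directly from $\widehat{T_0[\psi]}(\xi)=|\xi|^{-1}\hat\psi(\xi)$ that $(T_0\psi,\psi)\ge\|\psi\|_{\tilde H^{-1/2}(\Gamma)}^2$, whence $T_0$ is injective with closed range, and then uses $T_0^*=T_0$ to get surjectivity; compactness of $R$ (from the smooth kernel $\tfrac{e^{\ii k|\x-\x'|}-1}{4\pi|\x-\x'|}$) then gives index zero. Your G\aa rding approach instead keeps the full symbol $\hatg$ and pushes the low-frequency deficit $|\xi|<2k$ into a $\tilde H^{-1}$ remainder absorbed by Rellich; this has the advantage that it mirrors precisely the structure of Lemmas~\ref{cond0}--\ref{cond1} in the vector case and avoids explicitly proving that the principal part is an isomorphism. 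The paper's decomposition, on the other hand, isolates a static operator $T_0$ that is actually coercive (not merely G\aa rding), which makes the argument slightly more self-contained and in particular does not require invoking the compact embedding $\tilde H^{-1/2}(\Gamma)\hookrightarrow\tilde H^{-1}(\Gamma)$. Both approaches ultimately hinge on the boundedness of $\Gamma$, as you correctly flag.
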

\begin{proof}
 We write $T= T_0 + R$ where 
$T_0$ is defined by setting $k=0$ in the formula (\ref{eq-T}), and $R=T-T_0$. From the expansion of the function $g$ in wavenumber $k$, we see that $R$ is a compact operator from   $\tilde{H}^{-1/2}(\Gamma)$ to $ {H}^{1/2}(\Gamma)$. It is also straightforward to check that 
$T_0$ is bounded from   $\tilde{H}^{-1/2}(\Gamma)$ to $ {H}^{1/2}(\Gamma)$. 

We now show that $T_0$ is invertible with a bounded inverse. 
We first show that $\text{Ker } (T_0)$ is trivial. Indeed, for any $\psi \in \tilde{H}^{-1/2}(\Gamma)$, we have 
\[
T_0 [\psi](\x) = \int_{\Gamma}\frac{1}{4\pi|\x-\x'|}\psi(\x')dx'. 
\]
Therefore
\[
\widehat{T_0 [\psi]}(\xi) =  \frac{1}{|\xi| }\hat{\psi}(\xi). 
\]
It follows that 
\[
(T_0 [\psi], \psi) =
\int_{\mathbb{R}^2} \frac{1}{|\xi| }|\hat{\psi}(\xi)|^2 \geq  \int_{\mathbb{R}^2} \frac{1}{|\xi|+1 }|\hat{\psi}(\xi)|^2  = \|\psi\|_{\tilde{H}^{-1/2}(\Gamma)},
\]
whence $\text{Ker } (T_0)$ is trivial. Moreover, the above inequality also implies that $Ran(T_0)$ is closed in $ {H}^{1/2}(\Gamma)$. 
Next, we show that  the range of $T_0$, $\text{Ran } (T_0)$,  is dense in ${H}^{1/2}(\Gamma)$ and hence $T_0$ is surjective. 
By considering the adjoint operator $T^*: \tilde{H}^{-1/2}(\Gamma) \to {H}^{1/2}(\Gamma)$, we see that $T^*= T$ and hence $\text{Ker }  T^*$ is trivial. It follows that $\text{Ran }(T_0)$ is dense in ${H}^{1/2}(\Gamma)$. Therefore, we have proved that $T_0$ is bijective. By the open mapping theorem, $T_0$ is invertible with a bounded inverse.  Recall that $R$ is compact. We can conclude that $T=T_0+R$ is a Fredholm operator with an index zero. 

\end{proof}

\begin{proposition} \label{prop-T}
$T: \tilde{H}^{-1/2}(\Gamma) \to {H}^{1/2}(\Gamma)$ is invertible with a bounded inverse. 
\end{proposition}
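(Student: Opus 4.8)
The plan is to deduce the proposition from Lemma~\ref{lem-T} by supplying the one missing ingredient, namely the injectivity of $T$. Since Lemma~\ref{lem-T} already gives that $T:\tilde{H}^{-1/2}(\Gamma)\to H^{1/2}(\Gamma)$ is Fredholm with index zero, once $\text{Ker}(T)=\{0\}$ is known the cokernel is automatically trivial as well, so $T$ is a continuous bijection of Banach spaces; the bounded inverse (open mapping) theorem then produces $T^{-1}$ and the stability estimate $\|\psi\|_{\tilde{H}^{-1/2}(\Gamma)}\lesssim\|T[\psi]\|_{H^{1/2}(\Gamma)}$.

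To establish injectivity I would run the argument used in Proposition~\ref{prop:unique}. Assume $\psi\in\tilde{H}^{-1/2}(\Gamma)$ satisfies $T[\psi]=0$, and pair the identity with $\psi$. Using $\widehat{T[\psi]}(\xi)=\hatg(\xi)\hat{\psi}(\xi)$ and Parseval's identity, the dual pairing of $\psi$ against $T[\psi]$ becomes
\[
(T[\psi],\psi)=\frac{1}{4\pi^2}\int_{\mathbb{R}^2}\hatg(\xi)\,|\hat{\psi}(\xi)|^2\,d\xi,
\]
and the left-hand side vanishes because $T[\psi]=0$. The integral on the right converges absolutely: for $|\xi|>2k$ we have $|\hatg(\xi)|\lesssim|\xi|^{-1}$ together with $\int(1+|\xi|^2)^{-1/2}|\hat{\psi}|^2\,d\xi<\infty$, while for $|\xi|\le 2k$ Lemma~\ref{lem:linfty} bounds $|\hat{\psi}(\xi)|$ by $C\|\psi\|_{H^{-1/2}(\mathbb{R}^2)}$, which tames the only singularity of $\hatg$, namely the locally integrable factor $|k^2-|\xi|^2|^{-1/2}$ across the circle $|\xi|=k$. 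Now $\hatg(\xi)=(|\xi|^2-k^2)^{-1/2}$ is real for $|\xi|>k$, whereas $\hatg(\xi)=\ii(k^2-|\xi|^2)^{-1/2}$ for $|\xi|<k$, so taking imaginary parts yields
\[
0=\Im\,(T[\psi],\psi)=\frac{1}{4\pi^2}\int_{|\xi|<k}\frac{|\hat{\psi}(\xi)|^2}{\sqrt{k^2-|\xi|^2}}\,d\xi,
\]
hence $\hat{\psi}(\xi)=0$ for all $|\xi|<k$. Since $\psi$ is a compactly supported distribution, $\hat{\psi}$ is the restriction to $\mathbb{R}^2$ of an entire function (Paley--Wiener), so vanishing on the open ball $\{|\xi|<k\}$ forces $\hat{\psi}\equiv 0$ and therefore $\psi=0$.

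With $\text{Ker}(T)=\{0\}$ in hand, the conclusion follows as in the first paragraph. The step I expect to require the most care is not the algebra but the rigorous derivation of the Fourier-side identity for $(T[\psi],\psi)$ for a general $\psi\in\tilde{H}^{-1/2}(\Gamma)$: one first proves it for $\psi\in C_0^\infty(\Gamma)$, where Parseval applies directly and $T[\psi]$ is smooth, and then extends by density of $C_0^\infty(\Gamma)$ in $\tilde{H}^{-1/2}(\Gamma)$ using the continuity of $T$ into $H^{1/2}(\Gamma)$ and a dominated-convergence (or Fatou) argument for the right-hand side, the uniform control near $|\xi|=k$ again being furnished by Lemma~\ref{lem:linfty}.
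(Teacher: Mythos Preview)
Your proof is correct and follows essentially the same path as the paper's: invoke Lemma~\ref{lem-T} to reduce to injectivity, then pair $T[\psi]=0$ with $\psi$ and read off information from the Fourier-side identity. The only difference is in the last step: the paper takes both the imaginary and the real parts of $(T[\psi],\psi)=0$ to obtain $\hat{\psi}=0$ on $\{|\xi|\le k\}$ and $\{|\xi|>k\}$ separately, whereas you use only the imaginary part and then appeal to Paley--Wiener analyticity (exactly as in Proposition~\ref{prop:unique}) to propagate $\hat{\psi}=0$ from the ball $\{|\xi|<k\}$ to all of $\mathbb{R}^2$. Both conclusions are valid; your route avoids a second case but imports the analyticity argument, while the paper's is slightly more self-contained here. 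Your added remarks on the convergence of the Fourier integral and the density extension are more careful than what the paper writes explicitly.
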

\begin{proof}
By the result in Lemma \ref{lem-T}, we need only to show that $\text{Ker } T$ is trivial. Indeed, assume that $T\psi=0$ for some 
$\psi \in \tilde{H}^{-1/2}(\Gamma)$. We have 
\[
(T [\psi], \psi) = \int_{\mathbb{R}^2} \frac{\ii}{\sqrt{k^2 -|\xi|^2 } }|\hat{\psi}(\xi)|^2 d\xi =0.
\]
Taking the imaginary part of the above equation, we see that
$\hat{\psi}(\xi)=0$ for $|\xi|\leq k$. Similarly, taking the real part, we see that $\hat{\psi}(\xi)=0$ for $|\xi|> k$. Therefore, we can conclude that $\psi=0$. This proves that $\text{Ker }  T$ is trivial and completes the proof of the proposition. 
\end{proof}

\end{section}

\end{appendix}
\bibliographystyle{siamplain}

\end{document}